\newtheorem{theorem}{Theorem}
\newtheorem{lemma}[theorem]{Lemma}
\newcommand*{\N}{\ensuremath{\mathbb{N}}}
\newcommand*{\Z}{\ensuremath{\mathbb{Z}}}
\newcommand*{\R}{\ensuremath{\mathbb{R}}}
\newcommand*{\C}{\ensuremath{\mathbb{C}}}
\renewcommand{\i}{\mathrm{i}}
\renewcommand{\epsilon}{{\varepsilon}}
\renewcommand{\d}[1]{\,\mathrm{d}#1 \,}
\newcommand{\F}{\mathcal{F}} 
\newcommand{\J}{\mathcal{J}} 
\newcommand{\E}{{\mathcal{E}}}
\newcommand{\p}{{\mathrm{per}}}
\renewcommand{\O}{{\mathcal{O}}}
\newcommand{\X}{{\mathcal{X}}}
\newlength{\dhatheight}
\begin{document}

\sloppy

\title{Fast convergent PML method for scattering with periodic surfaces: the exceptional case}
\author{
Ruming Zhang\thanks{Institute of mathematics, Technische Universit{\"a}t Berlin, Berlin, Germany; \texttt{zhang@math.tu-berlin.de}}}
\date{}

\maketitle

\begin{abstract}
    In the author's previous paper \cite{Zhang2021b}, exponential convergence was proved for the perfectly matched layers (PML) approximation of scattering problems with periodic surfaces in 2D. However, due to the overlapping of singularities, an exceptional case, i.e., when the wave number is a half integer, has to be excluded in the proof. However, numerical results for these cases still have fast convergence rate and this motivates us to go deeper into these cases. In this paper, we focus on these cases and prove that the fast convergence result for the discretized form. Numerical examples are also presented to support our theoretical results.
\end{abstract}

\section{Introduction}

We focus on the scattering problem with a periodic surface in a two dimensional space. Let $\Gamma$ be a periodic surface in $\R^2$ which is defined as the graph of a periodic and bounded function $\zeta$. $\Omega$ is the half space above $\Gamma$. Let $\Gamma_H:=\R\times\{H\}$ be a straight line lying above $\Gamma$. 
For the visulization we refer to Figure \ref{fig:sample}.
\begin{figure}[h]
    \centering
    \includegraphics[width=0.7\textwidth]{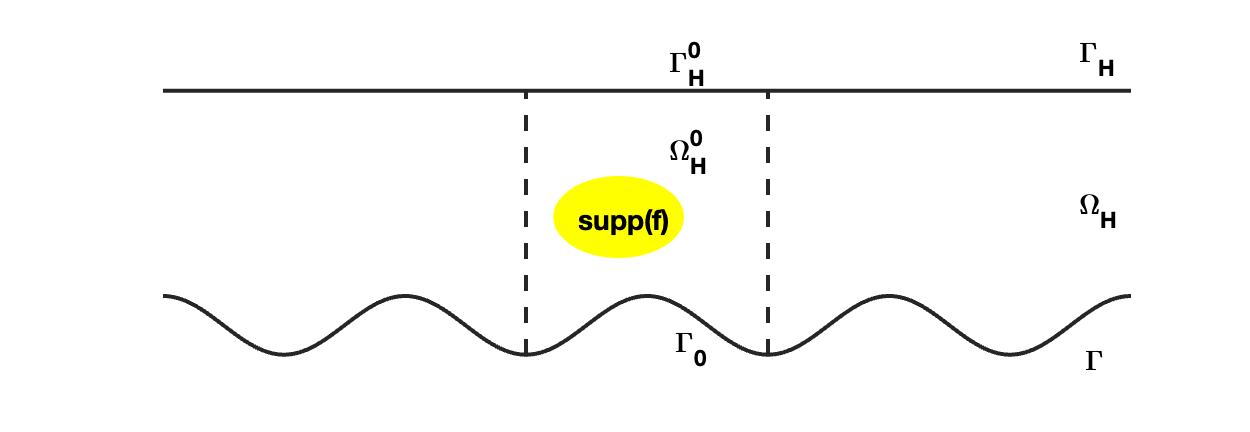}
    \caption{Periodic structures: domains and notations.}
    \label{fig:sample}
\end{figure}

The scattering problem is described by the following equations with a compact supported source term $f$:
\begin{equation}
 \label{eq:org}
 \Delta u+k^2 u=f\text{ in }\Omega;\quad u=0\text{ on }\Gamma
\end{equation}
with the upward propagating radiation condition (UPRC):
\[
u(x)=2\int_{\Gamma}\frac{\partial\Phi(x,y)}{\partial x_2}u(y)\d s(y),\quad x_2>H.
\]
As is proved in \cite{Chand2005}, the UPRC is equivalent to the following transparent boundary condition:
\begin{equation}
 \label{eq:tbc}
 \frac{\partial u}{\partial x_2}=T^+ u=\i\int_\R\sqrt{k^2-\xi^2}\widehat{u}(\xi,H)e^{\i x\xi}\d\xi.
\end{equation}
In \cite{Chand2010}, it is proved that the problem is uniquely solvable in the weighted Sobolev space $H^1_r(\Omega_H)$ for $|r|<1$, where $H^1_r(\Omega_H)$ is defined by the norm
\[
\|\phi\|_{H^1_r(\Omega_H)}=\|(1+x_1^2)^r\phi(x)\|_{H^1_r(\Omega_H)}.
\]

When the source term is quasi-periodic, there is a well-established framework to study these problems and both integral equations (see \cite{Kirsc1993,Arens2010,Zhang2014a}) and finite element methods (\cite{Bao1995,Bao1996,Bao2005}) are applied to produce good numerical approximations. However, for non-periodic incident fields, this framework no longer works thus new methods are necessary. If we simply ignore the periodicity of the surface, then techniques for rough surfaces can be applied, see \cite{Meier2000,Arens2003} for integral equation methods, and 
 \cite{Chand2010} for finite section methods. 
For periodic domains, a well-known tool, the Floquet-Bloch transform, is widely used to rewrite the original problem defined in an unbounded periodic domain into an equivalent new problem defined in a bounded domain in a higher dimensional space. A series of numerical methods have been proposed based on this transform, see \cite{Lechl2015e,Lechl2016,Lechl2016a,Lechl2016b,Lechl2017,Zhang2017e}.

Except for the difficulty brought by the unbounded periodic domain, the proposed methods also suffer from the complexity of the non-local Dirichlet-to-Neumann map $T^+$ in \eqref{eq:tbc}. To this end, the perfectly matched layers are adopted to avoid this difficult. We add an absorbing layer above the physical domain, then the propagating wave is absorbed and decays very fast within this layer. Then on the outer boundary of the layer, the Dirichlet or Neumann data is approximated by. In this case, since the boundary condition becomes local and standard, it is very convenient to be implemented by the finite element methods. Since the PML only provides approximated solutions, the convergence result is of essential importance. For periodic domains we  refer to \cite{Chen2003} and for rough surfaces we refer to \cite{Chand2009} for the convergence analysis. Although only linear convergence was proved globally for rough surfaces, the authors made a conjecture that actually exponential convergence holds locally. In \cite{Zhang2021a}, the author proved the exponential convergence for periodic surfaces except for the cases when the wave numbers are half integers, although numerical results show that fast convergence also happens in these cases. This interesting phenomenon motivates us to go deeper into this topic.

Following \cite{Zhang2021b}, we still apply the Floquet-Bloch transform to both the original and the PML problem, and then the solutions are written as the integral of a family of periodic problems, with respect to the Floquet parameter. With the help of the Gauss quadrature rule, the integral is discretized and convergence analysis is carried out on each node. Then finally we get the fast convergence of the discretized form. 

The rest of the paper is organized as follows. In the second section, the Floquet-Bloch transform is applied. Then in the third section, we introduce the PML approximation. Convergence analysis is discussed in Section 4, and numerical examples are presented in the last section.

\section{Application of the Floquet-Bloch transform}

Since the problem is formulated in the periodic strip $\Omega_H$, it is convenient to define the following  
 domains in one periodicity cell. Let $\Omega_0:=\Omega\cap(-\pi,\pi)\times\R$, $\Gamma_0:=\Gamma\cap(-\pi,\pi)\times\R$ and $\Gamma_H^0:=\Gamma_H\cap(-\pi,\pi)\times\R$. For the visulization of the domains we refer to Figure \ref{fig:sample}. For simplicity, suppose that $f$ is compactly supported in $\Omega_0$.

With $w(\alpha,x):=\F u$ where $\F$ is the Floquet-Bloch transform (see Appendix), then $w(\alpha,\cdot)$ is periodic w.r.t. $x_1$ and satisfies the following equations
\begin{equation}
 \label{eq:per}
 \Delta w(\alpha,\cdot)+2\i\alpha\frac{\partial w(\alpha,\cdot)}{\partial x_1}+(k^2-\alpha^2)w(\alpha,\cdot)=e^{-\i\alpha x_1}f\text{ in }\Omega_0;\quad w(\alpha,\cdot)=0\text{ on }\Gamma_0
\end{equation}
with the transparent boundary condition:
\begin{equation}
 \label{eq:tbc_per}
\frac{\partial w(\alpha,\cdot)}{\partial x_2}=T^+_\alpha w(\alpha,\cdot)=\i \sum_{j\in\Z}\sqrt{k^2-(\alpha+j)^2}\widehat{w}(\alpha,j)e^{\i j x_1}\text{ on }\Gamma_H^0,
\end{equation}
where $\widehat{w}(\alpha,j)$ is the $j$-th Fourier coefficient of $w(\alpha,\cdot)\big|_{\Gamma_H^0}$. 
The variational form for \eqref{eq:per}-\eqref{eq:tbc_per} is, find $w(\alpha,\cdot)\in H^1_\p(\Omega_H^0)$ such that
\begin{equation} \label{eq:per_var}
\begin{split}
   \int_{\Omega_H^0}&\left[\nabla w(\alpha,\cdot)\cdot\nabla\overline{\phi}-2\i\alpha\frac{\partial w(\alpha,\cdot)}{\partial x_1}\overline{\phi}-(k^2-\alpha^2)w(\alpha,\cdot)\overline{\phi}\right]\d x\\
   & -2\pi \i\sum_{j\in\Z}\sqrt{k^2-|\alpha+j|^2}\widehat{w}(\alpha,j)\overline{\widehat{\phi}(j)}=-\int_{\Omega_H^0}e^{-\i\alpha\cdot x}f(x)\overline{\phi}(x)\d x
\end{split}
\end{equation}
holds for any test function $\phi\in H^1_\p(\Omega_H^0)$.
The original solution $u$ is given by the inverse Floquet-Bloch transform as:
\begin{equation}
 \label{eq:ifb}
 u(x_1+2\pi j,x_2)=\int_{-1/2}^{1/2}e^{\i\alpha (x_1+2\pi j)}w(\alpha,x)\d\alpha,\quad x\in\Omega_0.
\end{equation}
Note that from \cite{Kirsc1993}, the problem \eqref{eq:per_var} is always well-posed for any $\alpha\in[-1/2,1/2]$. Thus the inverse Floquet-Bloch representation of $u$ is well defined.

From \cite{Zhang2021b}, there are two critical points in $[-1/2,1/2]$, such that at leaset one of the square roots equals to $0$. Let $\kappa:=\min_{n\in\Z}|k-n|\in [0,0.5]$, then there are two integers $j_-$, $j_+$ such that
\[
 \kappa+j_+=k;\quad-\kappa+j_-=-k,
\]
then $\pm\kappa$ are the two critical points. For any $j\neq j_\pm$, the term $\sqrt{k^2-|\alpha+j|^2}\neq 0$ for all $\alpha\in[-1/2,1/2]$. We define the following operators and functions as follows:
\begin{eqnarray*}  &&\left<A(\alpha)\psi,\phi\right>=\int_{\Omega_H^0}\left[\nabla \psi\cdot\nabla\overline{\phi}-2\i\alpha\frac{\partial \psi}{\partial x_1}\overline{\phi}-(k^2-\alpha^2)\psi\overline{\phi}\right]\d x
    -2\pi \i\sum_{j\neq j_\pm}\sqrt{k^2-|\alpha+j|^2}\psi(j)\overline{\widehat{\phi}(j)},\\
  &&  \left<B_\pm\psi,\phi\right>= 2\pi \i\psi(j_\pm)\overline{\widehat{\phi}(j_\pm)},\\
    &&\left<F(\alpha,\cdot),\phi\right>=-\int_{\Omega_H^0}e^{-\i\alpha x}f(x)\overline{\phi(x)}\d x,
\end{eqnarray*}
where $\psi,\phi\in H^1_\p(\Omega_H^0)$. 
The problem \eqref{eq:per}-\eqref{eq:tbc_per} is written as the following operator equation:
\[
S(\alpha)w(\alpha,\cdot)= \left(A(\alpha)-\sqrt{k^2-(\alpha+j_+)^2}B_+-\sqrt{k^2-(\alpha+j_-)^2}B_-\right)w(\alpha,\cdot)=F(\alpha),
\]
where $A(\alpha)$ and $F(\alpha)$ depend analytically on $\alpha$ near $\pm \kappa$. From \cite{Kirsc1993}, the problem is uniquely solvable in $\alpha\in\R$, and the solution $w$ depends continuously on $\alpha$ and there is a constant $C>0$ such that
\[
 \|w(\alpha,\cdot)\|_{H^1_\p(\Omega_0)}= \|S(\alpha)^{-1}F(\alpha,\cdot)\|_{H^1_\p(\Omega_0)}\leq C\|F(\alpha,\cdot)\|.
\]

Recall that when $2k\notin\N$, exponential convergence of the method has already been proved in \cite{Zhang2021b}, thus we only need to focus on the case that $2k\in\N$ in this paper. When $2k$ is even, then $\kappa=0$; when $2k$ is odd, then $\kappa=0.5$. Thus we choose the interval to be $[-1/2,1/2]$ when $2k$ is even, and $[0,1]$ when $2k$ is odd. For simplicity, we only focus on the case that $k$ is an integer, thus $\kappa=0$ and $j_\pm=\pm k$.

When $|\alpha|$ is close to $0$, the square roots have small absolute values. In this case, $A(\alpha)$ is a small perturbation of $S(\alpha)$ thus is uniformly invertible. 
Then
\[
\left[I-\sqrt{k^2-(\alpha+k)^2}A^{-1}(\alpha)B_+-\sqrt{k^2-(\alpha-k)^2}A^{-1}(\alpha)B_-\right]w(\alpha,\cdot)=A^{-1}(\alpha)F(\alpha,\cdot)
\]
For simplicity, let $\widetilde{B}_\pm(\alpha):=A^{-1}(\alpha)B_\pm$ and $\widetilde{F}(\alpha,\cdot):=A^{-1}(\alpha)F(\alpha,\cdot)$. 
From Neumann series,
\begin{align*}
 w(\alpha,\cdot)=\sum_{m=0}^\infty\sum_{n=0}^\infty\sqrt{k^2-(\alpha+k)^2}^m\sqrt{k^2-(\alpha-k)^2}^n \O\left(\widetilde{B}_+(\alpha),\widetilde{B}_-(\alpha),m,n\right) \widetilde{F}(\alpha,\cdot),
\end{align*}
where $\O\left(\widetilde{B}_+(\alpha),\widetilde{B}_-(\alpha),m,n\right)$ depends analytically on $\alpha$ and is the series of all the repetitions of $m$ times $\widetilde{B}_+(\alpha)$ and $n$ times $\widetilde{B}_-(\alpha)$. The number of terms in this series is ${{m+n}\choose{m}}$. Then
\begin{align*}
w(\alpha,\cdot)=\sum_{m=0}^\infty\sum_{n=0}^\infty\sqrt{k^2-(\alpha+k)^2}^m\sqrt{k^2-(\alpha-k)^2}^n w_{m,n}(\alpha,x)
\end{align*}
where $w_{m,n}(\alpha)=\O\left(\widetilde{B}_+(\alpha),\widetilde{B}_-(\alpha),m,n\right)\widetilde{F}(\alpha,\cdot)$ depends analytically on $\alpha\in(-1,1)$.
Let $\alpha=t^2$ for $\alpha\in[0,1)$, and $\alpha=-t^2$ for $\alpha\in(-1,0]$. Then $t\in[0,1)$ and
\[
w(\alpha,\cdot)=\begin{cases}
\displaystyle
 \sum_{m=0}^\infty\sum_{n=0}^\infty\i^m t^{m+n}\sqrt{2k+t^2}^m\sqrt{2k-t^2}^n w_{m,n}(t^2,x),\quad  \alpha\in[0,1);\\\displaystyle
  \sum_{m=0}^\infty\sum_{n=0}^\infty\i^n t^{m+n}\sqrt{2k-t^2}^m\sqrt{2k+t^2}^n w_{m,n}(-t^2,x),\quad  \alpha\in(-1,0].
\end{cases}
\]
Note that all the functions in this form depend analytically for $t\in(-1,1)$. We also get the original solution by changing the variable:
\begin{equation}
    \label{eq:inv_trans}
    u(x)=2\int_0^{1/\sqrt{2}}t e^{\i t^2 x_1}w(t^2,x)\d t+2\int_0^{1/\sqrt{2}}t e^{-\i t^2 x_1} w(-t^2,x)\d t.
\end{equation}
Moreover, the integrands are extended analytically to a small neighbourhood of  $[0,1/\sqrt{2}]$.

\section{PML approximation}

We add a PML layer above $\Gamma_H$ with thickness $\lambda>0$, described by the complex valued function $s(x_2)$ (see Figure \ref{fig:sample_pml}). Let $s(x_2)=1+\rho\widehat{s}(x_2)$, where $\rho>0$ be a parameter, $\widehat{s}(x_2)$ be a sufficiently smooth function:
\[
 \widehat{s}(x_2)=\begin{cases}
   \X\left(\frac{x_2-H}{\lambda}\right)^m,\quad x_2\in[H,H+\lambda];\\
   0,\quad x_2<H;
 \end{cases}
\]
and $\X$ is fixed with positive real and imaginary parts, $m$ an integer. Let
\[
 \sigma:=\int_H^{H+\lambda}s(x_2)\d x_2=\lambda\left(1+\frac{\rho\X}{m+1}\right).
\]
Then $u_\sigma$ is the approximated solution by the PML, with the equation:
\begin{equation}
 \label{eq:PML}
 \frac{\partial^2 u^\sigma}{\partial x_1^2}+\frac{1}{s(x_2)}\frac{\partial}{\partial x_2}\left(\frac{1}{s(x_2)}\frac{\partial u^\sigma}{\partial x_2}\right)+k^2 u^\sigma=f\text{ in }\Omega_{H+\lambda};\quad u^\sigma=0\text{ on }\Gamma\cup\Gamma_{H+\lambda}.
\end{equation}
Since $s(x_2)=1$ when $x_2<H$, then the problem is rewritten in $\Omega_H$ with a modificed transparent boundary condition:
\begin{eqnarray*}
    &&\Delta u^\sigma_2+k^2 u^\sigma=f\text{ in }\Omega_H;\\
    && u^\sigma=0\text{ on }\Gamma;\\
    &&\frac{\partial u^\sigma}{\partial x_2}=T^\sigma u^\sigma=\i\int_\R\sqrt{k^2-\xi^2}\coth\left(-\i\sqrt{k^2-\xi^2}\sigma\right)\widehat{u}^\sigma(\xi,H)e^{\i x\xi}\d\xi.
\end{eqnarray*}

\begin{figure}[h]
    \centering
    \includegraphics[width=0.7\textwidth]{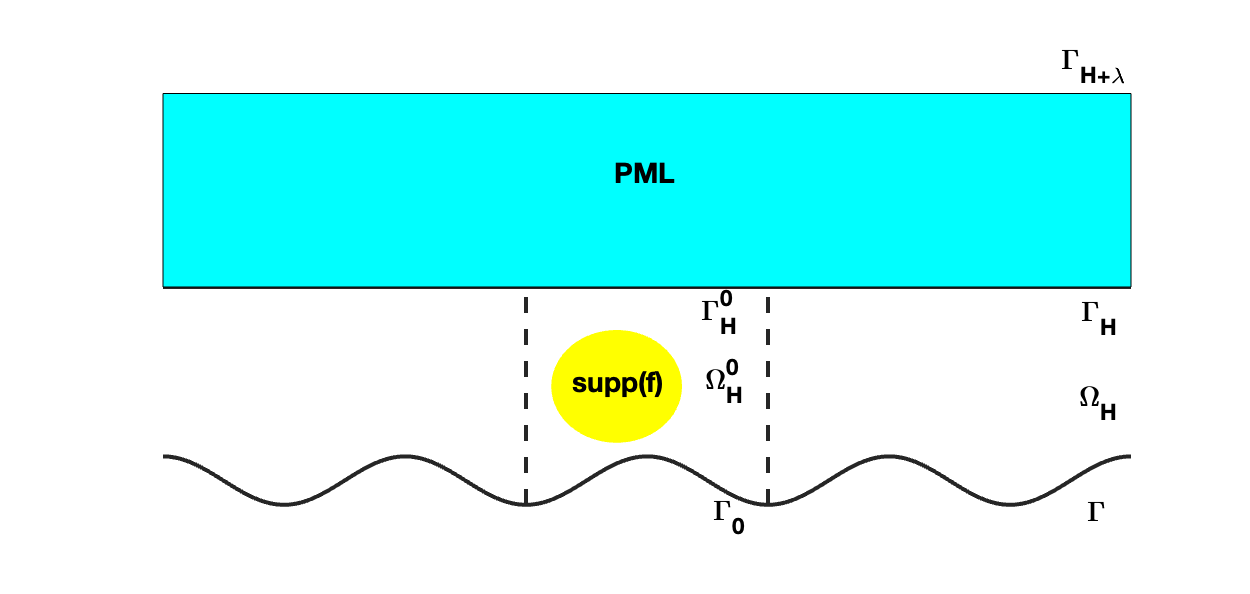}
    \caption{Perfectly matched layers.}
    \label{fig:sample_pml}
\end{figure}

Let $w_\sigma(\alpha,x):=\F u_\sigma$, then from \cite{Zhang2021b}, we get the formulation for the solution:
\begin{equation}
 \label{eq:per_pml}
 \Delta w^\sigma(\alpha,\cdot)+2\i\alpha\frac{\partial w^\sigma(\alpha,\cdot)}{\partial x_1}+(k^2-\alpha^2)w^\sigma(\alpha,\cdot)=e^{-\i\alpha x_1}f\text{ in }\Omega_0;\quad w^\sigma(\alpha,\cdot)=0\text{ on }\Gamma_0
\end{equation}
with the transparent boundary condition:
\begin{equation}
 \label{eq:tbc_per_pml}
\frac{\partial w_\sigma(\alpha,\cdot)}{\partial x_2}=T^\sigma_\alpha w_\sigma(\alpha,\cdot)=\i \sum_{j\in\Z}h(\alpha,\sigma,j)\widehat{w}^\sigma(\alpha,j)e^{\i j x_1}\text{ on }\Gamma_H^0.
\end{equation}
Here $h(\alpha,\sigma,j):=\sqrt{k^2-(\alpha+j)^2}\coth\left(-\i\sqrt{k^2-(\alpha+j)^2}\sigma\right)$.
Similar to the operator $A(\alpha)$, we define $A^\sigma(\alpha)$ by:
\[
\left<A^\sigma(\alpha)\psi,\phi\right>=\int_{\Omega_H^0}\left[\nabla \psi\cdot\nabla\overline{\phi}-2\i\alpha\frac{\partial \psi}{\partial x_1}\overline{\phi}-(k^2-\alpha^2)\psi\overline{\phi}\right]\d x
    -2\pi \i\sum_{j\neq j_\pm}h(\alpha,\sigma,j)\psi(j)\overline{\widehat{\phi}(j)}.
\]
The problem \eqref{eq:per_pml}-\eqref{eq:tbc_per_pml} is written as the following operator equation:
\begin{align*}
S^\sigma(\alpha)w^\sigma(\alpha,\cdot)= &\left(A^\sigma(\alpha)-h(\alpha,\sigma,k)B_+-h(\alpha,\sigma,-k)B_-\right)w_\sigma(\alpha,\cdot)=F(\alpha),
\end{align*}
where $A_\sigma(\alpha)$  depends analytically on $\alpha$ and is also uniformly invertible for $\alpha$ near $0$. Define $\widetilde{B}_\pm^\sigma(\alpha):=A^{-\sigma}(\alpha)B_\pm$ and $\widetilde{F}^\sigma(\alpha,\cdot):=A^{-\sigma}(\alpha)F(\alpha,\cdot)$. 
From Neumann series,
\begin{align*}
 w^\sigma(\alpha,\cdot)&=\sum_{m=0}^\infty\sum_{n=0}^\infty h^m(\alpha,\sigma,k)h^n(\alpha,\sigma,-k) \O\left(\widetilde{B}^\sigma_+(\alpha),\widetilde{B}^\sigma_-(\alpha),m,n\right) \widetilde{F}^\sigma(\alpha,\cdot)\\
 &=\sum_{m=0}^\infty\sum_{n=0}^\infty h^m(\alpha,\sigma,k)h^n(\alpha,\sigma,-k)w^\sigma_{m,n}(\alpha,\cdot).
\end{align*}
Since $h(\alpha,\sigma,\pm k)$ are analytic functions, $w^\sigma(\alpha,\cdot)$ is extended analytically to a neighbourhood of $[-1/2,1/2]$ in the complex plane.

To be align with the integral representation of the original solution $u$, we also change the variable $\alpha$ in the same way. Then $t\in[0,1)$ and
\[
w^\sigma(\alpha,\cdot)=\begin{cases}
\displaystyle
 \sum_{m=0}^\infty\sum_{n=0}^\infty h^m(t^2,\sigma,k)h^n(t^2,\sigma,-k) w_{m,n}^\sigma(t^2,x),\quad  \alpha\in[0,1);\\\displaystyle
  \sum_{m=0}^\infty\sum_{n=0}^\infty h^m(-t^2,\sigma,k)h^n(-t^2,\sigma,-k) w_{m,n}^\sigma(-t^2,x),\quad  \alpha\in(-1,0].
\end{cases}
\]
We also get the solution $u^\sigma$ from the inverse Floquet-Bloch transform by changing the variable:
\begin{equation}
    \label{eq:inv_trans_sigma}
    u^\sigma(x)=2\int_0^{1/\sqrt{2}}t e^{\i t^2 x_1} w^\sigma(t^2,x)\d t+2\int_0^{1/\sqrt{2}}t e^{-\i t^2 x_1} w^\sigma(-t^2,x)\d t.
\end{equation}
Similarly, the two integrands are extended analytically to small neighbourhoods of $[0,1/\sqrt{2}]$.

\section{Discretization and convergence analysis}

\subsection{Gauss-Legendre quadrature rule}
We apply the Gauss-Legendre quadrature rule to discretize the integral on the interval $[0,1/\sqrt{2}]$. Let the integrand  be denoted by $g(\alpha)$ (we omit the variable $x$ here for simplicity), where $g$ is analytic in $[0,1/\sqrt{2}]$. Thus we apply the Gauss-Legendre quadrature rule in each of the interval. Let $\alpha=\frac{t+1}{2\sqrt{2}}$, then
\[
\int_0^{1/\sqrt{2}}g(\alpha)\d\alpha=\frac{1}{2\sqrt{2}}\int_{-1}^{1}g\left(\frac{t+1}{2\sqrt{2}}\right)\d t.
\]
For any positive integer $N$, let $d_j$ be the nodes and $s_j$ be the weights of the Gauss-Legendre quadrature rule, where $j=1,2,\dots,N$. Then
\[
\int_0^{1/\sqrt{2}}g(\alpha)\d\alpha\approx \frac{1}{2\sqrt{2}}\sum_{j=1}^N s_j g\left(\frac{d_j+1}{2\sqrt{2}}\right).
\]
Then $u$ and $u^\sigma$ hav the discretized forms:
\begin{align*}
u_N(x)&=\sum_{j=1}^N s_j\left(\frac{d_j+1}{4}\right) \exp\left(\i x_1\left(\frac{d_j+1}{2\sqrt{2}}\right)^2\right) w\left(\left(\frac{d_j+1}{2\sqrt{2}}\right)^2,x\right)\\
&+\sum_{j=1}^N s_j\left(\frac{d_j+1}{4}\right) \exp\left(-\i x_1\left(\frac{d_j+1}{2\sqrt{2}}\right)^2\right) w\left(-\left(\frac{d_j+1}{2\sqrt{2}}\right)^2,x\right)
\end{align*}
and
\begin{align*}
u_N^\sigma(x)&=\sum_{j=1}^N s_j\left(\frac{d_j+1}{4}\right) \exp\left(\i x_1\left(\frac{d_j+1}{2\sqrt{2}}\right)^2\right) w^\sigma\left(\left(\frac{d_j+1}{2\sqrt{2}}\right)^2,x\right)\\
&+\sum_{j=1}^N s_j\left(\frac{d_j+1}{4}\right) \exp\left(-\i x_1\left(\frac{d_j+1}{2\sqrt{2}}\right)^2\right) w^\sigma\left(-\left(\frac{d_j+1}{2\sqrt{2}}\right)^2,x\right)
\end{align*}
To estimate the error of the quadrature rule, we need the following theorem.

\begin{theorem}[Theorem 5.3.13, \cite{Saute2007}]
  \label{th:err_Gaussian}Let $\E_{-1,1}^\rho\subset\C$ be the ellipse with foci at $(\pm 1,0)$ and sum of the half-axes $\rho>1$.
 Let $\zeta : \, [-1,1] \rightarrow \C$ be real analytic with complex analytic extension to $\E_{-1,1}^\rho$. Denote by $I$ the integral over $(-1,1)$ with integrand $\zeta$ and by $Q_N$ its approximation by the $N$-point Gauss-Legendre quadrature. Then
  \[
    | I - Q_N| \leq C \, \rho^{-2N} \max_{z \in \partial \E^\rho_{-1,1}} |\zeta(z)| \, .
  \]
\end{theorem}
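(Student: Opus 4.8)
The plan is to combine two classical ingredients: the high algebraic exactness of the Gauss--Legendre rule and the geometric decay of polynomial approximations to functions analytic on a Bernstein ellipse. Recall that the $N$-point Gauss--Legendre rule $Q_N$ integrates every polynomial of degree at most $2N-1$ exactly, and that its weights are positive with $\sum_{j=1}^N s_j=\int_{-1}^1 1\,\mathrm{d}t=2$. Hence both $I$ and $Q_N$, viewed as linear functionals on $C[-1,1]$, are bounded by $2\|\cdot\|_\infty$. The strategy is then to subtract off a near-optimal polynomial $p$ of degree $2N-1$: since $I(p)=Q_N(p)$ by exactness, one gets
\[
|I(\zeta)-Q_N(\zeta)|=|I(\zeta-p)-Q_N(\zeta-p)|\le |I(\zeta-p)|+|Q_N(\zeta-p)|\le 4\,\|\zeta-p\|_{\infty,[-1,1]},
\]
so the whole problem reduces to bounding the best uniform approximation error $E_{2N-1}(\zeta):=\inf_{\deg p\le 2N-1}\|\zeta-p\|_{\infty,[-1,1]}$.

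Next I would expand $\zeta$ in Chebyshev polynomials, $\zeta(t)=\sum_{n=0}^\infty a_n T_n(t)$, and exploit that the analytic extension to $\E^\rho_{-1,1}$ forces the coefficients to decay geometrically. Under the Joukowski parametrization $t=\tfrac12(z+z^{-1})$, the Chebyshev polynomial $T_n$ behaves like $\tfrac12(z^n+z^{-n})$ and the ellipse $\partial\E^\rho_{-1,1}$ corresponds to the circle $|z|=\rho$; deforming the contour in the integral representation of $a_n$ onto $\partial\E^\rho_{-1,1}$ then yields, with $M:=\max_{z\in\partial\E^\rho_{-1,1}}|\zeta(z)|$,
\[
|a_n|\le \frac{2M}{\rho^n}.
\]
Since $\|T_n\|_{\infty,[-1,1]}=1$, truncating the Chebyshev series at degree $2N-1$ and summing the geometric tail gives
\[
E_{2N-1}(\zeta)\le \sum_{n=2N}^\infty |a_n|\le 2M\sum_{n=2N}^\infty \rho^{-n}=\frac{2M}{1-\rho^{-1}}\,\rho^{-2N}.
\]
Combining this with the first display produces $|I(\zeta)-Q_N(\zeta)|\le \frac{8}{1-\rho^{-1}}\,M\,\rho^{-2N}$, which is exactly the asserted bound with $C=8/(1-\rho^{-1})$.

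I expect the main obstacle to be the coefficient estimate $|a_n|\le 2M\rho^{-n}$, as this is the only place where analyticity on the whole ellipse is genuinely used: it requires the precise correspondence between $\partial\E^\rho_{-1,1}$ and the circle $|z|=\rho$ under the Joukowski map, together with a careful justification that the defining contour for $a_n$ may be deformed into the region of analyticity without encountering singularities (if analyticity holds only on the open ellipse one deforms onto $\partial\E^{\rho'}_{-1,1}$ and lets $\rho'\uparrow\rho$). The doubling of the exponent, $\rho^{-2N}$ rather than $\rho^{-N}$, is not automatic: it hinges on using the full exactness degree $2N-1$ of the Gauss--Legendre rule, which in turn rests on the characterization of the nodes $d_j$ as the roots of the degree-$N$ Legendre polynomial. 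An alternative route would express the error functional $\zeta\mapsto I(\zeta)-Q_N(\zeta)$ directly as a contour integral $\frac{1}{2\pi\i}\oint_{\partial\E^\rho_{-1,1}}K_N(z)\zeta(z)\,\mathrm{d}z$ against an explicit kernel $K_N$, and bound $|K_N|$ on the ellipse; there the $\rho^{-2N}$ decay appears through the square of the ratio of the Legendre function of the second kind to the Legendre polynomial, but the approximation-theoretic argument above is shorter and more transparent.
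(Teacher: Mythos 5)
Your argument is correct, and it is worth noting that the paper itself offers no proof of this statement: it is quoted verbatim as Theorem 5.3.13 of the cited monograph of Sauter and Schwab, so there is no internal proof to compare against. Your derivation is the classical one and all the steps check out: the reduction $|I(\zeta)-Q_N(\zeta)|\le 4E_{2N-1}(\zeta)$ uses exactly the two facts you isolate (exactness of the $N$-point rule on polynomials of degree $2N-1$ and positivity of the weights with $\sum_j s_j=2$, so that both functionals have norm $2$ on $C[-1,1]$); the Chebyshev coefficient bound $|a_n|\le 2M\rho^{-n}$ is the standard Bernstein estimate obtained by pushing the contour in the Joukowski variable from $|z|=1$ to $|z|=\rho$; and summing the tail gives $E_{2N-1}(\zeta)\le 2M\rho^{-2N}/(1-\rho^{-1})$, hence the stated bound with $C=8/(1-\rho^{-1})$. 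The constant depends on $\rho$, which is consistent with the theorem as stated. You correctly flag the one delicate point, namely that if $\zeta$ is analytic only on the open ellipse one must work on $\partial\mathcal{E}^{\rho'}_{-1,1}$ and let $\rho'\uparrow\rho$ (and the maximum over $\partial\mathcal{E}^{\rho}_{-1,1}$ is then to be read as a supremum, assumed finite). The alternative route you mention, writing the error as a contour integral against the kernel built from the Legendre function of the second kind, is indeed the proof given in several of the standard references and yields the sharper asymptotics, but your approximation-theoretic argument fully suffices for the bound as stated.
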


From  analytic extension, the functions $e^{\i\alpha x_1}w(\alpha,x)$ and $e^{\i\alpha x_1}w^\sigma(\alpha,x)$ are extended analytically to  small neighbourhoods of $[-1/2,0]$ and $[0,1/2]$ in the complex plane.  From the change of variable, the integrals are written as the integral on $[-1,1]$, and the complex neighbourhoods are transformed into small neighbourhoods of $[-1,1]$. We can easily find a small ellipse that lies inside the small neighbourhood with $\rho>1$. In this case,  there is a constant $C>0$ such that
\begin{equation}
\label{eq:conv_GL}
\left\|u-u_N\right\|_{H^1(\Omega_H^0)}\leq C\rho^{-2N}.
\end{equation}

\subsection{Convergence of the PML approximation}

 First, we need some basic properties for $s_j$ and $d_j$. From the definition of the weights, we have the following property:
\begin{equation}
    \label{eq:weights}
s_j>0\text{ for }j=1,2,\dots,N;\quad     s_1+s_2+\cdots+s_N=2.
\end{equation}
We also need the estimations for $d_j$. For details we refer to \cite{Bruns1881}.
\begin{theorem}
  \label{th:nodes}
  Let $P_N(\cos\theta)$ be the Legendre polynomial. Let $\theta_1,\theta_2,\dots,\theta_N$ be the zeros of $P_N(\cos\theta)$ in increasing order. Then 
  \[
\frac{2j-1}{2N+1}\pi<\theta_j<\frac{2j}{2N+1}\pi,\quad j=1,2,\dots,N.
  \]
\end{theorem}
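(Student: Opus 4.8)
The plan is to put Legendre's equation into Liouville normal form and then read off the two bounds from a single Sturm comparison against a pure sinusoid, using the reflection symmetry of $P_N$ to supply one of the two inequalities for free. Writing $y(\theta)=P_N(\cos\theta)$ and substituting $x=\cos\theta$ turns Legendre's equation into $y''+\cot\theta\,y'+N(N+1)y=0$ on $(0,\pi)$; the further substitution $u(\theta)=\sqrt{\sin\theta}\,y(\theta)$ removes the first–order term and yields the normal form
\[
u''+\Big[\big(N+\tfrac12\big)^2+\tfrac14\,\csc^2\theta\Big]u=0,\qquad \theta\in(0,\pi).
\]
Since $\sqrt{\sin\theta}>0$ on $(0,\pi)$, the zeros $\theta_1<\cdots<\theta_N$ of $P_N(\cos\theta)$ coincide there with the zeros of $u$, and the potential $Q(\theta):=\big(N+\tfrac12\big)^2+\tfrac14\csc^2\theta$ strictly exceeds the constant $q:=\big(N+\tfrac12\big)^2$.

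First I would prove the upper bound $\theta_j<\tfrac{2j}{2N+1}\pi$. I compare $u$ with $v(\theta):=\sin\big((N+\tfrac12)\theta\big)$, which solves $v''+qv=0$ and whose positive zeros are $\tfrac{2m}{2N+1}\pi$ for $m=1,2,\dots$. For two consecutive zeros $a<b$ of $v$ the Wronskian $W:=u'v-uv'$ satisfies $W'=(q-Q)uv$; integrating and using $W(a)=-u(a)v'(a)$ and $W(b)=-u(b)v'(b)$ gives the classical sign contradiction if $u$ keeps a fixed sign on $(a,b)$, so $u$ has at least one zero in each interval $\big(\tfrac{2(m-1)}{2N+1}\pi,\tfrac{2m}{2N+1}\pi\big)$. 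Counting the $j$ such intervals contained in $\big(0,\tfrac{2j}{2N+1}\pi\big)$ then forces at least $j$ zeros of $u$ below $\tfrac{2j}{2N+1}\pi$, i.e. $\theta_j<\tfrac{2j}{2N+1}\pi$ for $1\le j\le N$ (note $\tfrac{2N}{2N+1}\pi<\pi$, so all $N$ zeros are accounted for).

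The lower bound then comes from symmetry rather than a second comparison, which is important because the naive interval count only yields the weaker estimate $\theta_j>\tfrac{2j-2}{2N+1}\pi$. Because $P_N(-x)=(-1)^NP_N(x)$, the zeros in the variable $x=\cos\theta$ are symmetric about $0$, i.e. $\theta_j+\theta_{N+1-j}=\pi$. Applying the already-proved upper bound to the index $N+1-j$ gives $\theta_{N+1-j}<\tfrac{2(N+1-j)}{2N+1}\pi$, whence $\theta_j=\pi-\theta_{N+1-j}>\tfrac{2j-1}{2N+1}\pi$, which is exactly the required left inequality.

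The delicate point I expect to cost the most care is the comparison on the first interval $\big(0,\tfrac{2}{2N+1}\pi\big)$, since $\theta=0$ is a singular endpoint of the normal-form equation: there $Q$ blows up and the regular solution behaves like $u\sim c\sqrt{\theta}$, so $u(0)=0$ but $u'(0^+)=+\infty$. To make the Wronskian identity legitimate at $a=0$ I would first run the comparison on $[\varepsilon,b]$ and let $\varepsilon\to0^+$, verifying that the boundary term $W(\varepsilon)=u'(\varepsilon)v(\varepsilon)-u(\varepsilon)v'(\varepsilon)\to0$ from $v(\theta)\sim(N+\tfrac12)\theta$ together with $u\sim c\sqrt{\theta}$ and $u'\sim\tfrac{c}{2}\theta^{-1/2}$. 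Once $W(0^+)=0$ is secured the same sign contradiction applies, giving $\theta_1<\tfrac{2}{2N+1}\pi$, and the interval count proceeds without further obstruction.
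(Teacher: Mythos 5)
Your proposal is correct, but note that the paper itself offers no proof of this theorem at all: it simply cites Bruns (1881) and moves on, so there is nothing internal to compare against. What you have reconstructed is the classical Sturm-comparison proof of Bruns' inequality (essentially the argument in Szeg\H{o}'s \emph{Orthogonal Polynomials}, Theorem 6.21.2). The key computations all check out: the Liouville normal form of $y''+\cot\theta\,y'+N(N+1)y=0$ under $u=\sqrt{\sin\theta}\,y$ does produce the potential $\bigl(N+\tfrac12\bigr)^2+\tfrac14\csc^2\theta$, which strictly dominates $\bigl(N+\tfrac12\bigr)^2$ on $(0,\pi)$; the Wronskian identity $W'=(q-Q)uv$ with the endpoint signs $W(a)\le 0$, $W(b)\ge 0$ yields the strict contradiction, placing one zero of $u$ in each of the $N$ open intervals $\bigl(\tfrac{2(m-1)}{2N+1}\pi,\tfrac{2m}{2N+1}\pi\bigr)$, and since $P_N(\cos\theta)$ has exactly $N$ zeros in $(0,\pi)$ this pins down $\theta_j<\tfrac{2j}{2N+1}\pi$. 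You are also right that the interval count alone gives only $\theta_j>\tfrac{2j-2}{2N+1}\pi$ and that the sharper constant $2j-1$ must come from the parity relation $\theta_j+\theta_{N+1-j}=\pi$; this is exactly how the classical proof closes the gap. Your treatment of the singular endpoint is the one genuinely delicate point and you handle it properly: $u\sim\sqrt{\theta}$, $v\sim(N+\tfrac12)\theta$ give $W(\varepsilon)=O(\sqrt{\varepsilon})\to 0$, and the integrand $(q-Q)uv=O(\theta^{-1/2})$ is integrable, so the limiting argument is legitimate. In short, you have supplied a complete and correct proof of a statement the paper leaves as an external citation.
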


To estimate the locations of the nodes $d_j$, we also need the following basic property of the cosine function.

\begin{lemma}
    \label{lm:cos}
    For $\theta\in[0,\pi/2]$, $$1-\frac{\theta^2}{2}\leq
\cos\theta \leq 1-\frac{\theta^2}{3};$$ for $\theta\in[\pi/2,\pi]$, $$-1+\frac{(\pi-\theta)^2}{3}
\leq\cos\theta \leq -1+\frac{(\pi-\theta)^2}{2}.$$
\end{lemma}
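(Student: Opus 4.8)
The plan is to reduce everything to the single inequality pair on $[0,\pi/2]$ and then obtain the statement on $[\pi/2,\pi]$ for free by the reflection $\phi=\pi-\theta$. With $\phi=\pi-\theta\in[0,\pi/2]$ one has $\cos\theta=\cos(\pi-\phi)=-\cos\phi$, so the claimed bounds $-1+\frac{(\pi-\theta)^2}{3}\leq\cos\theta\leq-1+\frac{(\pi-\theta)^2}{2}$ become, after multiplying through by $-1$, precisely $1-\frac{\phi^2}{2}\leq\cos\phi\leq 1-\frac{\phi^2}{3}$. Hence it suffices to prove the first display for $\theta\in[0,\pi/2]$, and the second display follows immediately.

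For the lower bound $1-\frac{\theta^2}{2}\leq\cos\theta$, I would set $g(\theta):=\cos\theta-1+\frac{\theta^2}{2}$ and note $g(0)=0$ while $g'(\theta)=\theta-\sin\theta\geq 0$ for $\theta\geq 0$ (since $\sin\theta\leq\theta$ there). Thus $g$ is nondecreasing and stays $\geq 0$; in fact this half of the bound holds for all $\theta\geq 0$, not only on $[0,\pi/2]$.

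For the upper bound $\cos\theta\leq 1-\frac{\theta^2}{3}$ I would study $h(\theta):=1-\frac{\theta^2}{3}-\cos\theta$ on $[0,\pi/2]$. One computes $h(0)=0$, $h'(\theta)=\sin\theta-\frac{2\theta}{3}$ and $h''(\theta)=\cos\theta-\frac{2}{3}$. Since $h''$ changes sign exactly once on $[0,\pi/2]$, namely at $\arccos\frac{2}{3}$, the derivative $h'$ is first increasing then decreasing and so has a single sign change, from $+$ to $-$, in the interior. Consequently $h$ first increases and then decreases, so its minimum over $[0,\pi/2]$ is attained at an endpoint. Evaluating, $h(0)=0$ and $h(\pi/2)=1-\frac{\pi^2}{12}>0$, whence $h\geq 0$ throughout, which is exactly the desired inequality.

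The only genuinely delicate point is this upper bound, because $1-\frac{\theta^2}{3}$ is not the Taylor polynomial of $\cos\theta$ and the auxiliary function $h$ is \emph{not} monotone: one cannot conclude from $h(0)=0$ and $h'(0)=0$ alone. The shape argument (unimodality of $h'$, coming from the single sign change of $h''$) together with the positivity $h(\pi/2)>0$ at the right endpoint is exactly what excludes a dip below zero in the interior. Everything else is a routine monotonicity computation, and the passage to $[\pi/2,\pi]$ is the reflection $\phi=\pi-\theta$.
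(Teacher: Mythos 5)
Your proposal is correct, and for the lower bound and the reflection to $[\pi/2,\pi]$ it coincides with the paper's argument (Taylor/monotonicity for $1-\tfrac{\theta^2}{2}\leq\cos\theta$, and $\cos\theta=-\cos(\pi-\theta)$ for the second display). For the upper bound, however, you take a genuinely different route, and it is the better one. The paper sets $r(\theta)=\cos\theta+\tfrac{\theta^2}{3}-1$ and claims $r'(\theta)=-\sin\theta+\tfrac{2\theta}{3}\leq 0$ on all of $[0,\pi/2]$, concluding $r\leq 0$ by monotonicity from $r(0)=0$. That sign claim is false near the right endpoint: $r'(\pi/2)=-1+\tfrac{\pi}{3}>0$, since $\sin\theta\geq\tfrac{2\theta}{3}$ fails once $\theta$ exceeds roughly $1.496$. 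So the paper's $r$ actually decreases and then increases on $[0,\pi/2]$, and the pure monotonicity argument does not close; one must additionally check the right endpoint, where $r(\pi/2)=\tfrac{\pi^2}{12}-1<0$. Your unimodality argument --- $h''=\cos\theta-\tfrac23$ changes sign once, hence $h'$ changes sign once from $+$ to $-$, hence $h$ attains its minimum at an endpoint, and $h(0)=0$, $h(\pi/2)=1-\tfrac{\pi^2}{12}>0$ --- supplies exactly the missing endpoint check and is the point you correctly identify as the delicate one. In short: your proof is complete where the paper's, as written, has a gap.
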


\begin{proof}
    For $\theta \in[0,\pi/2]$, from Taylor's series, $\cos\theta\geq 1-\frac{\theta^2}{2}$. Let $r(\theta)=\cos(\theta)+\frac{\theta^2}{3}-1$.
    First, $r(0)=0$. Moreover,
    \[
r'(\theta)=-\sin(\theta)+\frac{2\theta}{3}=-\frac{\theta}{3}+\frac{\theta^3}{3!}-\frac{\theta^5}{5!}\leq 0.
    \]
This implies that $r(\theta)\leq 0$ for $\theta\in[0,\pi/2]$. The second argument comes from the fact that $\cos\theta=-\cos(\pi-\theta)$. 
\end{proof}

Let $d_j=\cos\theta_{N+1-j}$, from the properties of the cosine function, we have:
\[
-1<d_1<d_2<\cdots<d_N<1.
\]
Moreover, from Lemma \ref{lm:cos}, for $j=1,2,\dots,\lfloor\frac{N}{2}\rfloor$ where $\lfloor x\rfloor={\rm floor}(x)$, since $\theta_{N+1-j}\in[\pi/2,\pi]$,
\[
-1+\frac{j^2}{3N^2}<-1+\frac{(2j-1)^2\pi^2}{3(2N+1)^2}\leq d_j\leq -1+\frac{2 j^2\pi^2}{(2N+1)^2}<-1+\frac{5j^2}{N^2}.
\]
Let $\alpha_j=\left(\frac{d_j+1}{2\sqrt{2}}\right)^2$, $\alpha_{j+N}=-\left(\frac{d_j+1}{2\sqrt{2}}\right)^2$, then
\[\alpha_j=\alpha_{j+N}=
\left(\frac{d_j+1}{2\sqrt{2}}\right)^2\geq \frac{j^4}{72N^4}\quad\text{and}\quad \frac{j^2}{3N^2}<d_1+1<\frac{5j^2}{N^2}.
\]
For $j\geq \lfloor\frac{N}{2}\rfloor+1$, since $\theta_{N+1-j}\leq \pi/2$, $d_j\geq 0$. This implies that $\alpha_j\geq 1/8$, $\alpha_{j+N}\leq -1/8$, 

Let $d_{j+N}=d_j$ and $s_{j+N}:=s_j$, where $j=1,2,\dots,N$. 
From the properties of $d_j$
\[
-\frac{1}{2}<\alpha_{2N}<\alpha_{2N-1}<\cdots<\alpha_{N+1}<-\frac{1}{72N^4}<0<\frac{1}{72N^4}<\alpha_{1}<\alpha_{2}<\cdots<\alpha_{N}<\frac{1}{2}.
\] 
Thus $|\alpha_j-n|\geq\frac{1}{72N^4}$ for all integers $n\in\Z$.
For each fixed $(m,n)$, the discretized forms
\[
u_N(x)=\sum_{j=1}^{2N} s_j \left(\frac{d_j+1}{4}\right) e^{\i \alpha_j x_1}w(\alpha_j,x),\quad
\text{and}\quad
u_N^\sigma(x)=\sum_{j=1}^{2N} s_j \left(\frac{d_j+1}{4}\right)e^{\i \alpha_j x_1}w(\alpha_j,x).\]

To compare the above formulas,  we need to estimate the function
\[
\sqrt{k^2-z^2}\left[\coth(-\i\sqrt{k^2-z^2}\sigma)-1\right]=\frac{2\sqrt{k^2-z^2}}{\exp\left[-2\i \sqrt{k^2-z^2}\sigma\right]-1}.
\]
The key result is described in the following lemma.

\begin{lemma}
    \label{lm:est}
    For any $0<\delta<k$ and $z\in (-\infty,-k-\delta]\cup[-k+\delta,k-\delta]\cup[k+\delta,+\infty)$, there are two constants $C_1,C_2>0$ such that
    \[
\left|\frac{2\sqrt{k^2-z^2}}{\exp\left[-2\i \sqrt{k^2-z^2}\sigma\right]-1}\right|\leq C_1|\sigma|^{-1}\exp\left(-C_2\sqrt{k\delta}|\sigma|\right).
\]
\end{lemma}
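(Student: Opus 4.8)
The plan is to analyse the complex number $\mu := \sqrt{k^2 - z^2}$ separately on the three real intervals and to exploit that both $\Re\sigma$ and $\Im\sigma$ are strictly positive (this is exactly where the hypothesis that $\X$ has positive real and imaginary parts enters, since $\sigma = \lambda(1+\rho\X/(m+1))$ with $\lambda,\rho>0$). First I would record the two regimes for $\mu$. On the middle interval $|z|\le k-\delta$ we have $k^2-z^2 \ge k^2-(k-\delta)^2 = \delta(2k-\delta) \ge k\delta$, so $\mu$ is real with $\mu\ge\sqrt{k\delta}$. On the outer intervals $|z|\ge k+\delta$ we have $z^2-k^2 \ge (k+\delta)^2-k^2 > k\delta$, and the radiation branch gives $\mu = \i\sqrt{z^2-k^2}$, purely imaginary with $|\mu|\ge\sqrt{k\delta}$. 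In both cases $\mu$ lies in the closed first quadrant and $|\mu|\ge\sqrt{k\delta}$; this uniform lower bound is what produces the $\sqrt{k\delta}$ in the exponent.

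The heart of the argument is a lower bound for $\Re(-2\i\mu\sigma)$. Writing $\arg\mu\in\{0,\pi/2\}$ and $\arg\sigma\in(0,\pi/2)$, one computes
\[
\Re(-2\i\mu\sigma) = 2|\mu||\sigma|\sin(\arg\mu+\arg\sigma) \ge 2c_0|\mu||\sigma|,
\]
where $c_0 := \min(\cos\arg\sigma,\sin\arg\sigma) > 0$ depends only on $\arg\sigma$ (equivalently $c_0|\sigma| \le \min(\Re\sigma,\Im\sigma)$). Concretely this real part equals $2\mu\,\Im\sigma$ on the middle interval and $2|\mu|\,\Re\sigma$ on the outer intervals, so the positivity of \emph{both} $\Re\sigma$ and $\Im\sigma$ is genuinely used. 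Consequently $|e^{-2\i\mu\sigma}| = e^{\Re(-2\i\mu\sigma)} \ge e^{2c_0|\mu||\sigma|} \ge e^{2c_0\sqrt{k\delta}|\sigma|}$, and by the reverse triangle inequality, once $|\sigma|$ is large enough that $e^{2c_0\sqrt{k\delta}|\sigma|}\ge 2$,
\[
\left|e^{-2\i\mu\sigma}-1\right| \ge |e^{-2\i\mu\sigma}| - 1 \ge \tfrac12 e^{2c_0|\mu||\sigma|}.
\]

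Combining these gives $\left|\tfrac{2\mu}{e^{-2\i\mu\sigma}-1}\right| \le 4|\mu|e^{-2c_0|\mu||\sigma|}$, and the final step is to split the exponent so as to absorb the (possibly large) factor $|\mu|$ coming from $|z|\to\infty$. Using $\sup_{x>0}xe^{-c_0|\sigma|x} = (ec_0|\sigma|)^{-1}$ together with $|\mu|\ge\sqrt{k\delta}$,
\[
4|\mu|e^{-2c_0|\mu||\sigma|} = 4\big(|\mu|e^{-c_0|\mu||\sigma|}\big)e^{-c_0|\mu||\sigma|} \le \frac{4}{ec_0}\,|\sigma|^{-1}e^{-c_0\sqrt{k\delta}|\sigma|},
\]
which is the claimed estimate with $C_1 = 4/(ec_0)$ and $C_2 = c_0$; the finitely many small values of $|\sigma|$ are absorbed by enlarging $C_1$, the expression being continuous and bounded there. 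I expect the main obstacle to be the quadrant bookkeeping: one must verify the sign and the quantitative lower bound of $\Re(-2\i\mu\sigma)$ \emph{uniformly} across the real and purely imaginary regimes of $\mu$, and check that the governing quantity is $\min(\Re\sigma,\Im\sigma)$ rather than either part alone, which is precisely why $\X$ must have both parts positive. The secondary subtlety is extracting the $|\sigma|^{-1}$ prefactor against an unbounded numerator as $|z|\to\infty$, which the exponential-splitting trick resolves; note also that $C_1,C_2$ depend on the PML profile through $\arg\sigma$.
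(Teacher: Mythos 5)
Your proposal is correct and follows the same skeleton as the paper's proof: split into the regime where $\sqrt{k^2-z^2}$ is purely real ($|z|\le k-\delta$) and purely imaginary ($|z|\ge k+\delta$), use the uniform lower bound $|\sqrt{k^2-z^2}|\ge\sqrt{k\delta}$, bound the denominator from below by $\bigl|e^{-2\i\sqrt{k^2-z^2}\sigma}\bigr|-1$ via the reverse triangle inequality, and exploit that $\Re\sigma$ and $\Im\sigma$ are both positive (the paper likewise uses $\sigma_2$ in the first case and $\sigma_1$ in the second, merging them into $|\sigma|$ at the end exactly as your $c_0$ does). There are two points of genuine divergence. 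First, the mechanism for producing the $|\sigma|^{-1}$ prefactor: the paper factors $e^{2x}-1=(e^x+1)(e^x-1)$ and applies the mean value theorem to get $e^x-1\ge x$, which works for all $\sigma$ without a largeness assumption; you instead use $\sup_{x>0}xe^{-c_0|\sigma|x}=(ec_0|\sigma|)^{-1}$ and absorb small $|\sigma|$ into the constant. Both are fine. Second, and more substantively, the treatment of the numerator: the paper asserts the two-sided bounds $\sqrt{k^2-z^2}\le\sqrt{2k\delta}$ and $-\i\sqrt{k^2-z^2}\le\sqrt{3k\delta}$, which are only the values at the endpoints $|z|=k\mp\delta$; on the middle interval the numerator can be as large as $k$, and on the outer intervals it is unbounded as $|z|\to\infty$, so the paper's stated constant $2\sqrt{3k\delta}$ in the numerator is not actually valid there. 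Your exponential-splitting step, which trades half of the decay $e^{-c_0|\mu||\sigma|}$ against the growth of $|\mu|$, is precisely what is needed to make this uniform in $z$ over the unbounded intervals, so on this point your argument is more careful than the one in the paper.
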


\begin{proof}
Since $z$ is real, $\sqrt{k^2-z^2}$ is either purely real or purely imaginary. 

\vspace{0.5cm}
\noindent
i) When $|z|\leq k-\delta$, $\sqrt{k^2-z^2}$ is purely real and
\[
\sqrt{k\delta}\leq\sqrt{k^2-z^2}=\sqrt{2k\delta-\delta^2}\leq \sqrt{2k\delta}.
\]
Then
\[
\left|\exp\left[-2\i \sqrt{k^2-z^2}\sigma\right]-1\right|\geq \left|\exp\left[-2\i \sqrt{k^2-z^2}\sigma\right]\right|-1=\exp\left(2\sqrt{k\delta}\sigma_2\right)-1.
\]
Thus
\[
\left|\frac{2\sqrt{k^2-z^2}}{\exp\left[-2\i \sqrt{k^2-z^2}\sigma\right]-1}\right|\leq \frac{2\sqrt{2k\delta}}{\exp\left(2\sqrt{k\delta}\sigma_2\right)-1}=\frac{2\sqrt{2k\delta}}{\left(\exp\left(\sqrt{k\delta}\sigma_2\right)+1\right)\left(\exp\left(\sqrt{k\delta}\sigma_2\right)-1\right)}.
\]
From the mean value theorem, there is a $\xi_1\in[0,\sqrt{k\delta}\sigma_2]$ such that
\[
\exp\left(\sqrt{k\delta}\sigma_2\right)-1=\sqrt{k\delta}\sigma_2\exp\left(\xi_1\right)\geq \sqrt{k\delta}\sigma_2.
\]
Then
\[
\left|\frac{2\sqrt{k^2-z^2}}{\exp\left[-2\i \sqrt{k^2-z^2}\sigma\right]-1}\right|\leq \frac{2\sqrt{2}}{\sigma_2}\exp\left(-\sqrt{k\delta}\sigma_2\right).
\]

\vspace{0.5cm}
\noindent
ii) When $|z|\geq k+\delta$, $\sqrt{k^2-z^2}$ is purely imaginary and
\[
\sqrt{2k\delta}\leq -\i\sqrt{k^2-z^2}=\sqrt{2k\delta+\delta^2}\leq \sqrt{3k\delta}.
\]
Then
\[
\left|\exp\left[-2\i \sqrt{k^2-z^2}\sigma\right]-1\right|\geq \left|\exp\left[-2\i \sqrt{k^2-z^2}\sigma\right]\right|-1=\exp\left(2\sqrt{2k\delta}\sigma_1\right)-1.
\]  
Thus
\[
\left|\frac{2\sqrt{k^2-z^2}}{\exp\left[-2\i \sqrt{k^2-z^2}\sigma\right]-1}\right|\leq \frac{2\sqrt{3k\delta}}{\exp\left(2\sqrt{2k\delta}\sigma_1\right)-1}.
\]
With similar arguments as i), we get
\[
\left|\frac{2\sqrt{k^2-z^2}}{\exp\left[-2\i \sqrt{k^2-z^2}\sigma\right]-1}\right|\leq \frac{\sqrt{6}}{\sigma_1}\exp\left(-\sqrt{2k\delta}\sigma_1\right).
\]
Since $\sigma=\sigma_1+\i\sigma_2$ for positive $\sigma_1,\sigma_2$, we can easily find the constants $C_1$ and $C_2$ to finish the proof.
\end{proof}

From above result, we conclude immediately that
\[
\left|h(\alpha,\sigma,j)-\sqrt{k^2-(\alpha+j)^2}\right|\leq C_1|\sigma|^{-1}\exp\left(-C_2\sqrt{k\delta}|\sigma|\right)
\]
when $|\alpha+j-k|\geq\delta$.
In particular, for any $\alpha_j$ and $\alpha_{j+N}$ ($j=1,2,\dots,\lfloor\frac{N}{2}\rfloor$), since $k$ is a positive integer, $|\alpha_j+\ell|,|\alpha_{j+N}+\ell|\geq \frac{j^4}{72N^4}$ holds uniformly for all $\ell\in\Z$. Thus by modifying the constant $C_2$,
\begin{equation}
    \label{eq:est_diff}
    \left|h(\alpha_j,\sigma,\ell)-\sqrt{k^2-(\alpha_j+\ell)^2}\right|\leq C_1|\sigma|^{-1}\exp\left(-C_2\sqrt{k}|\sigma|(j/N)^2\right).
\end{equation}
For $j=\lfloor\frac{N}{2}\rfloor+1,\dots,N$, since $|\alpha_j-n|,|\alpha_{j+N}-n|\geq 1/8$, there is a constant $C_3$ such that
\begin{equation}
    \label{eq:est_diff_far}
     \left|h(\alpha_j,\sigma,\ell)-\sqrt{k^2-(\alpha_j+\ell)^2}\right|\leq C_1|\sigma|^{-1}\exp\left(-C_3\sqrt{k}|\sigma|\right).
\end{equation}

The following lemma is proved by this result immediately.

\begin{lemma}
 \label{lm:conv_w}
 The function $w^\sigma(\alpha_j,\cdot)$ converges to $w(\alpha_j,\cdot)$ uniformly w.r.t. $j$, 
 \begin{align*}
 & \|w^\sigma(\alpha_j,\cdot)-w(\alpha_j,\cdot)\|_{H^1_\p(\Omega_H^0)},  \|w^\sigma(\alpha_{j+N},\cdot)-w(\alpha_{j+N},\cdot)\|_{H^1_\p(\Omega_H^0)}\\&\leq\begin{cases}
    C_1|\sigma|^{-1}\exp\left(-C_2\sqrt{k}|\sigma|(j/N)^2\right),\quad j=1,2,\dots,\lfloor\frac{N}{2}\rfloor;\\
    C_1|\sigma|^{-1}\exp\left(-C_3\sqrt{k}|\sigma|\right),\quad j=\lfloor\frac{N}{2}\rfloor+1,\dots,N. 
  \end{cases} 
 \end{align*}
 for the fixed constant $C_1,\,C_2,\,C_3>0$.
\end{lemma}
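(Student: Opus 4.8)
The plan is to compare $w^\sigma(\alpha_j,\cdot)$ and $w(\alpha_j,\cdot)$ through the operator equations they satisfy, namely $S(\alpha_j)w(\alpha_j,\cdot)=F(\alpha_j)$ and $S^\sigma(\alpha_j)w^\sigma(\alpha_j,\cdot)=F(\alpha_j)$, which crucially share the same right-hand side. Subtracting and rearranging gives the resolvent-type identity
\[
w^\sigma(\alpha_j,\cdot)-w(\alpha_j,\cdot)=S^\sigma(\alpha_j)^{-1}\bigl(S(\alpha_j)-S^\sigma(\alpha_j)\bigr)w(\alpha_j,\cdot),
\]
so the error is controlled once I bound three ingredients: the perturbation $S(\alpha_j)-S^\sigma(\alpha_j)$, the inverse $S^\sigma(\alpha_j)^{-1}$ (uniformly in $j$ and $\sigma$), and $w(\alpha_j,\cdot)$ itself. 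The last is immediate, since $\|w(\alpha_j,\cdot)\|_{H^1_\p(\Omega_H^0)}\le C\|F(\alpha_j)\|\le C\|f\|_{L^2(\Omega_0)}$ holds uniformly because $|e^{-\i\alpha x}|=1$ and $S(\alpha)^{-1}$ is uniformly bounded on the compact interval $[-1/2,1/2]$.

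Next I would bound the perturbation. The operators $S(\alpha)$ and $S^\sigma(\alpha)$ share identical volume terms and differ \emph{only} through their boundary (Dirichlet-to-Neumann) contributions, so $S(\alpha)-S^\sigma(\alpha)$ acts only on the traces on $\Gamma_H^0$ with Fourier symbol $c_\ell(\alpha,\sigma):=\sqrt{k^2-(\alpha+\ell)^2}-h(\alpha,\sigma,\ell)$, summed over all $\ell\in\Z$ including the critical modes $\ell=\pm k$. By the trace theorem the induced operator norm is controlled by $\sup_{\ell\in\Z}(1+\ell^2)^{-1/2}|c_\ell|$, and since the weight $(1+\ell^2)^{-1/2}\le 1$, the uniform-in-$\ell$ bounds \eqref{eq:est_diff} (for $j\le\lfloor N/2\rfloor$) and \eqref{eq:est_diff_far} (for $j>\lfloor N/2\rfloor$) give directly
\[
\bigl\|S(\alpha_j)-S^\sigma(\alpha_j)\bigr\|\le C_1|\sigma|^{-1}\exp\bigl(-C_2\sqrt{k}\,|\sigma|\,(j/N)^2\bigr)
\]
in the first range, and the analogous $C_3$-version in the second. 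The slowest-decaying contribution comes precisely from $\ell=\pm k$, where the distance to the critical points equals $|\alpha_j|\ge j^4/(72N^4)$, which is exactly what produces the exponent $(j/N)^2$.

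For the inverse I would exploit a two-fold reading of this same estimate. Because the exponential factor never exceeds $1$, the perturbation bound degrades to the $j$- and $\sigma$-uniform estimate $\|S(\alpha_j)-S^\sigma(\alpha_j)\|\le C_1|\sigma|^{-1}$. Since $S(\alpha_j)^{-1}$ is uniformly bounded, a Neumann-series argument then shows that for $|\sigma|$ large enough (uniformly in $j$) $S^\sigma(\alpha_j)$ is invertible with $\|S^\sigma(\alpha_j)^{-1}\|\le 2\|S(\alpha_j)^{-1}\|\le C$. Feeding the three bounds into the resolvent identity yields the claimed estimate; the nodes $\alpha_{j+N}=-\alpha_j$ are treated identically, by the reflection symmetry already built into the construction.

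The hard part will be the uniformity in $j$, and in particular reconciling it with the smallest nodes. For tiny $j$ the critical-mode distance $\delta=|\alpha_j|$ is minuscule and the exponential factor is close to $1$, so a naive perturbation argument appears to break down there. The crux is to separate the two roles played by the estimate: the prefactor $|\sigma|^{-1}$ alone (with the exponential discarded) already makes $S^\sigma-S$ small uniformly in $j$, which secures invertibility of $S^\sigma$, while the full exponential factor is reserved entirely for the final error bound and never needed for the Neumann series. The one technical point to nail down carefully is that the trace weight $(1+\ell^2)^{-1/2}$ really does turn the uniform-in-$\ell$ symbol bounds into an operator-norm bound dominated by the modes $\ell=\pm k$, so that the infinite sum over $\ell$ causes no difficulty and no summability loss enters the constants.
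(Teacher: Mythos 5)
Your argument is correct and is precisely the perturbation argument the paper leaves implicit when it asserts that the lemma is ``proved by this result immediately'' from \eqref{eq:est_diff}--\eqref{eq:est_diff_far}: the resolvent identity $w^\sigma-w=S^\sigma(\alpha_j)^{-1}\bigl(S(\alpha_j)-S^\sigma(\alpha_j)\bigr)w$, the trace-space bound on the Dirichlet-to-Neumann difference via $\sup_\ell(1+\ell^2)^{-1/2}|c_\ell|$, and the Neumann-series invertibility of $S^\sigma(\alpha_j)$ uniformly in $j$ for large $|\sigma|$ are exactly the missing details. Your observation that the crude bound $\|S-S^\sigma\|\le C_1|\sigma|^{-1}$ suffices for invertibility while the exponential factor is reserved for the final estimate correctly resolves the only delicate point (the small nodes $\alpha_j$ near the critical value).
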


Based on above results, we conclude the following estimation.

\begin{theorem}
    \label{th:est_N}
    For sufficiently large $|\sigma|$ and $N$, the following estimation holds:
    \[
\left\|u_N-u_N^\sigma\right\|<C_5|\sigma|^{-1}N\exp\left(-C_4\sqrt{k}|\sigma|N^{-2}\right)
    \]
    for the fixed constants $C_5,\,C_4>0$.
\end{theorem}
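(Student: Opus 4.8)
The plan is to estimate the two quadrature sums against each other term by term. Subtracting the discretized forms gives
\[
u_N-u_N^\sigma=\sum_{j=1}^{2N}s_j\,\frac{d_j+1}{4}\,e^{\i\alpha_j x_1}\left[w(\alpha_j,\cdot)-w^\sigma(\alpha_j,\cdot)\right],
\]
so by the triangle inequality it suffices to bound, for each $j$, the product of the scalar weight $s_j(d_j+1)/4$, the operator norm of multiplication by $e^{\i\alpha_j x_1}$, and the difference $\|w(\alpha_j,\cdot)-w^\sigma(\alpha_j,\cdot)\|_{H^1_\p(\Omega_H^0)}$, which is exactly what Lemma~\ref{lm:conv_w} supplies.

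First I would clear away the constant factors. From \eqref{eq:weights} one has $0<s_j\leq 2$, and since $-1<d_j<1$ also $0<(d_j+1)/4<1/2$, so the scalar weight never exceeds $1$. Because $|\alpha_j|\leq 1/2$ and $\nabla(e^{\i\alpha_j x_1}g)=e^{\i\alpha_j x_1}(\nabla g+\i\alpha_j g\,\e_1)$, multiplication by $e^{\i\alpha_j x_1}$ is bounded from $H^1_\p(\Omega_H^0)$ into $H^1(\Omega_H^0)$ with a norm bounded uniformly in $j$. Exploiting the symmetry $\alpha_{j+N}=-\alpha_j$, $s_{j+N}=s_j$, $d_{j+N}=d_j$ and the fact that Lemma~\ref{lm:conv_w} gives the same bound at $\alpha_j$ and $\alpha_{j+N}$, the whole estimate collapses to
\[
\|u_N-u_N^\sigma\|\leq C\sum_{j=1}^{N}\left\|w(\alpha_j,\cdot)-w^\sigma(\alpha_j,\cdot)\right\|_{H^1_\p(\Omega_H^0)}
\]
for a uniform constant $C>0$.

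Then I would split this sum at $\lfloor N/2\rfloor$ and insert the two regimes of Lemma~\ref{lm:conv_w}. For $j=1,\dots,\lfloor N/2\rfloor$ each summand is at most $C_1|\sigma|^{-1}\exp(-C_2\sqrt{k}|\sigma|(j/N)^2)$; the slowest decay is at $j=1$, so estimating every term by the $j=1$ term and using that there are at most $N/2$ of them gives $\tfrac12 C_1|\sigma|^{-1}N\exp(-C_2\sqrt{k}|\sigma|N^{-2})$. For $j=\lfloor N/2\rfloor+1,\dots,N$ each summand is at most $C_1|\sigma|^{-1}\exp(-C_3\sqrt{k}|\sigma|)$, and for large $N$ the inequality $\exp(-C_3\sqrt{k}|\sigma|)\leq\exp(-C_2\sqrt{k}|\sigma|N^{-2})$ holds, so the second block is absorbed into the first. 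Collecting the two contributions and setting $C_4=C_2$ yields the stated bound with some $C_5>0$.

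The only load-bearing step is the first-block estimate. The nodes that cluster near the critical point $\alpha=0$ are precisely those at which $w^\sigma$ converges most slowly to $w$, and it is the quadratic node spacing $|\alpha_j|\gtrsim(j/N)^2$ coming from Theorem~\ref{th:nodes} and Lemma~\ref{lm:cos} that turns the worst ($j=1$) rate into the exponent $N^{-2}$ and produces the prefactor $N$. Everything else is uniform-constant bookkeeping; the only point requiring care is to take $|\sigma|$ large enough (relative to $N$) that the far block \eqref{eq:est_diff_far} is genuinely dominated by the near block.
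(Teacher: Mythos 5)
Your proof is correct and follows essentially the same route as the paper: triangle inequality over the $2N$ nodes, a split at $\lfloor N/2\rfloor$, Lemma~\ref{lm:conv_w} in the two regimes, the near block bounded by $N$ times the worst ($j=1$) term, and the far block absorbed for large $N$. The only cosmetic difference is that the paper keeps the sharper weight bound $(d_j+1)/4\leq 5j^2/(4N^2)$ and sums $j^2/N^2$ instead of counting terms, which yields the same $O(N)$ prefactor.
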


\begin{proof}
    From the discretization of $u$ and $u^\sigma$, using Lemma \ref{lm:conv_w} and \eqref{eq:weights}, \eqref{eq:est_diff}-\eqref{eq:est_diff_far},
    \begin{align*}
\left\|u_N-u_N^\sigma\right\|_{H^1(\Omega_H^0)}\leq &\sum_{j=1}^{2N} s_j\left(\frac{d_j+1}{4}\right)\left\|w(\alpha_j,\cdot)- w^\sigma( \alpha_j ,\cdot)\right\|_{H^1_\p(\Omega_H^0)}\\
\leq&\left[\sum_{j=1}^{\lfloor\frac{N}{2}\rfloor}+\sum_{j=N+1}^{N+\lfloor\frac{N}{2}\rfloor}+\sum_{j=\lfloor\frac{N}{2}\rfloor+1}^N+\sum_{j=\lfloor\frac{N}{2}\rfloor+N+1}^{2N}\right] \\&\quad\cdot s_j\left(\frac{d_j+1}{4}\right)\left\|w(\alpha_j,\cdot)- w^\sigma( \alpha_j ,\cdot)\right\|_{H^1_\p(\Omega_H^0)}\\
\leq &2 \sum_{j=1}^{\lfloor\frac{N}{2}\rfloor} s_j \frac{5j^2}{4N^2}C_1|\sigma|^{-1}\exp\left(-C_2\sqrt{k}|\sigma|(j/N)^2\right)\\&\quad+2\sum_{j=\lfloor\frac{N}{2}\rfloor+1}^N  \frac{s_j}{2} C_1|\sigma|^{-1}\exp\left(-C_3\sqrt{k}|\sigma|\right)\\
\leq & \sum_{j=1}^{\lfloor\frac{N}{2}\rfloor} \frac{5j^2}{N^2}C_1|\sigma|^{-1}\exp\left(-C_2\sqrt{k}|\sigma|N^{-2}\right)+2 C_1|\sigma|^{-1}\exp\left(-C_3\sqrt{k}|\sigma|\right)\\
\leq& C_4 |\sigma|^{-1}N\exp\left(-C_2\sqrt{k}|\sigma|N^{-2}\right)+2 C_1|\sigma|^{-1}\exp\left(-C_3\sqrt{k}|\sigma|\right)
    \end{align*}
The proof is finished by choosing proper constants $C_4$ and $C_5$.
\end{proof}

Together with \eqref{eq:conv_GL}, we get the final conclusion.

\begin{theorem}
    \label{th:final}
 For sufficient large $|\sigma|$ and $N=O\left(\sqrt{|\sigma|}\right)$, there are two constants $c,\,C>0$ such that
 \[
\|u-u^\sigma_N\|\leq C \exp(-c|\sigma|^{1/3}).
 \]
\end{theorem}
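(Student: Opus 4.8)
The plan is to bound $\|u-u_N^\sigma\|$ by the triangle inequality, splitting it into the pure quadrature error for the exact solution and the nodewise PML error:
\[
\|u-u_N^\sigma\|\leq\|u-u_N\|+\|u_N-u_N^\sigma\|.
\]
The first term is the error of approximating the two integrals in \eqref{eq:inv_trans} by the $N$-point Gauss--Legendre rule applied to the genuine (non-PML) integrand; it is controlled by \eqref{eq:conv_GL}, namely $\|u-u_N\|_{H^1(\Omega_H^0)}\leq C\rho^{-2N}$ with some fixed $\rho>1$ coming from the analytic ellipse of Theorem \ref{th:err_Gaussian}. The second term measures how far the nodal values of the PML integrand are from those of the exact integrand (through $w^\sigma(\alpha_j,\cdot)-w(\alpha_j,\cdot)$, controlled by Lemma \ref{lm:conv_w}), and is exactly the content of Theorem \ref{th:est_N}:
\[
\|u_N-u_N^\sigma\|\leq C_5|\sigma|^{-1}N\exp\left(-C_4\sqrt{k}|\sigma|N^{-2}\right).
\]

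Rewriting $\rho^{-2N}=\exp(-2N\ln\rho)$, I would then compare the two exponents $2N\ln\rho$ and $C_4\sqrt{k}|\sigma|N^{-2}$. These pull in opposite directions: increasing $N$ sharpens the quadrature estimate but \emph{weakens} the PML estimate, since its exponent carries the factor $N^{-2}$. The total error is therefore minimized by balancing the two, i.e.\ by choosing $N$ so that $2N\ln\rho\asymp C_4\sqrt{k}|\sigma|N^{-2}$, which gives $N^3\asymp|\sigma|$ and hence $N=O(|\sigma|^{1/3})$, well within the admissible growth $N=O(\sqrt{|\sigma|})$. Concretely I would fix the proportionality constant in $N\asymp|\sigma|^{1/3}$ so as to equalize the two exponents.

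With this choice both exponents scale like $|\sigma|^{1/3}$: the quadrature exponent becomes $\exp(-c_1|\sigma|^{1/3})$, while the PML exponent becomes $C_4\sqrt{k}|\sigma|\cdot|\sigma|^{-2/3}=c_2|\sigma|^{1/3}$. The remaining prefactor $|\sigma|^{-1}N\asymp|\sigma|^{-2/3}$ in the second bound is harmless; it is absorbed into the exponential using that $p(|\sigma|)\exp(-a|\sigma|^{1/3})\leq C\exp(-c|\sigma|^{1/3})$ for any polynomial $p$ and any $0<c<a$ once $|\sigma|$ is large. Collecting the two bounds and setting $c:=\min(c_1,c_2)$ after this absorption yields $\|u-u_N^\sigma\|\leq C\exp(-c|\sigma|^{1/3})$.

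The main technical point, and where I would spend the most care, is this balancing step: recognizing that the $N^{-2}$ inside the PML exponent forces $N$ to grow only sublinearly, as $|\sigma|^{1/3}$, and that this is precisely the scaling converting the linear-in-$|\sigma|$ gain of Theorem \ref{th:est_N} into the $|\sigma|^{1/3}$ rate. A secondary, routine matter is verifying that the ``sufficiently large $|\sigma|$ and $N$'' hypotheses of Theorem \ref{th:est_N} hold simultaneously, which is automatic since tying $N$ to $|\sigma|^{1/3}$ forces $N\to\infty$ as $|\sigma|\to\infty$, and that the final constant $C$ absorbs both the quadrature constant in \eqref{eq:conv_GL} and the constant $C_5$.
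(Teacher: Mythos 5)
Your proposal is correct and follows essentially the same route as the paper: the same decomposition into the quadrature error \eqref{eq:conv_GL} plus the nodewise PML error of Theorem \ref{th:est_N}, followed by balancing the exponents $2N\ln\rho$ and $C_4\sqrt{k}|\sigma|N^{-2}$ to arrive at $N\asymp|\sigma|^{1/3}$. Your explicit absorption of the prefactor $|\sigma|^{-1}N$ into the exponential is in fact slightly more careful than the paper's own write-up, which drops that factor silently.
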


\begin{proof}
    From above results, we get the following inequality immediately:
    \[
\|u-u^\sigma_N\|\leq C\rho^{-2N}+C_5|\sigma|^{-1}N\exp\left(-C_4\sqrt{k}|\sigma|N^{-2}\right).
    \]
Note that for a $\rho>1$, there is a constant $C_6>0$ such that $\rho=e^{C_6}$, then $\rho^{-2N}=e^{-2C_6 N}$. Let $N=|\sigma|^{\gamma}$ where $\gamma\in(0,1)$ (if it is not an integer, then it is the closest integer), then
 \[
\|u-u^\sigma_N\|\leq C\exp(-2C_6|\sigma|^{\gamma})+C_5|\sigma|^{-1}\exp\left(-C_4\sqrt{k}|\sigma|^{1-2\gamma}\right).
    \]
Let $\gamma=1/3$, then we have two constants $c,C>0$ such that
\[
\|u-u^\sigma_N\|\leq C\exp\left(-c|\sigma|^{1/3}\right).
\] 
\end{proof}

\section{Numerical examples}

In this section, we present four numerical examples to support our theoretical result. For all the examples, the periodic surface is given by
\[
\Gamma:=\left\{\left(x_1,1.5+\frac{\sin x_1}{3}-\frac{\cos 2x_1}{4}\right):\,x_1\in\R\right\}.
\]
The PML lies in the strip $\R\times[2.5,4]$ and $\mathcal{X}=\exp(\i\pi/4)$. The source term $f$ is compactly supported and defined as
\[f(x_1,x_2)=\begin{cases}
    \cos(2\pi x_1)\sin(2\pi x_2),\quad\text{ when }\sqrt{(x_1+0.4)^2+(x_2-1.8)^2}<0.4;\\
    0,\quad\text{ otherwise.}
\end{cases}
\]
For the visualization of the structures and sources we refer to Figure \ref{fig:ne}.
\begin{figure}[h]
    \centering
    \includegraphics[width=0.45\textwidth]{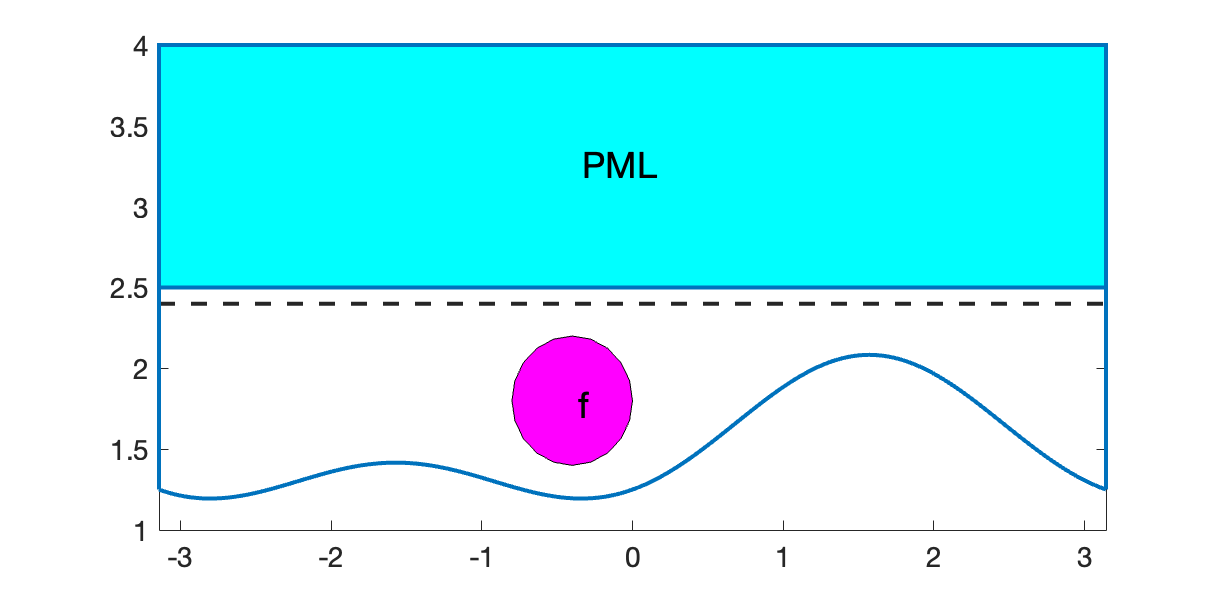}\includegraphics[width=0.45\textwidth]{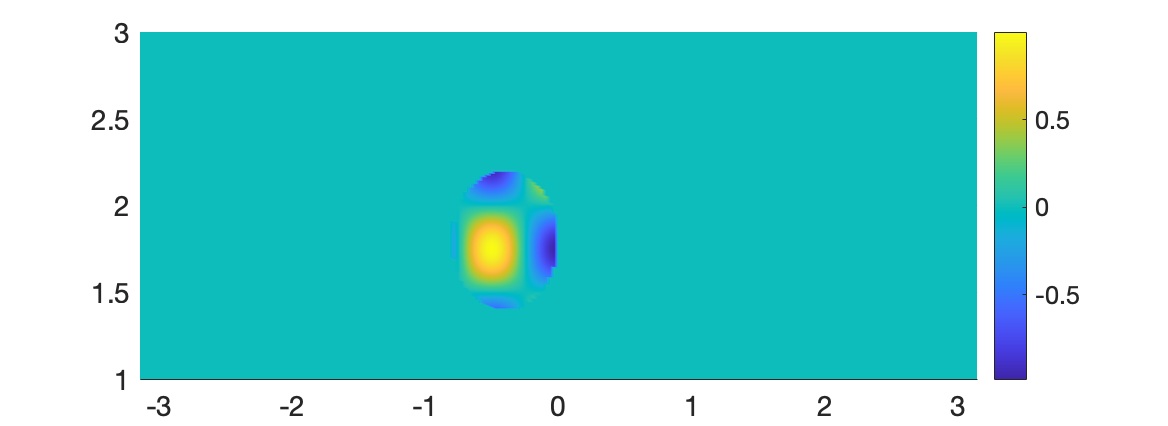}
    \caption{Left: structure; right: source term.}
    \label{fig:ne}
\end{figure}

We take $\rho=2,4,6,8,10,12,14,16$, and let the numerical solution when $\rho=25$ as the reference. To solve the periodic problem \eqref{eq:PML}, we apply the finite element method with the mesh size $0.050$. For each numerical result, we take the value at $[-\pi,\pi]\times\{2.4\}$ and compare the relative $L^2$-norm on this line segment:
\[
{\rm error}_{\rho,k}:=\frac{\|u_{\rho,k}(\cdot,2.4)-u_{25,k}(\cdot,2.4)\|_{L^2(-\pi,\pi)}}{\|u_{\rho,k}(\cdot,2.4)\|_{L^2(-\pi,\pi)}}.
\]
The relative errors for $k=1,1.5,2.5,5$ are listed in Table \ref{table}, and the relative errors are shown in Figure \ref{fig:err}. Note that in the pictures, the $x$-axis is $\log(\rho)$, and the $y$-axis is $\log(-\log({\rm error}_{\rho,k}))$. The red dots are computational results and the the black dashed lines are the linear regressions.

\begin{table}[h]
    \centering
    \begin{tabular}{|c|c|c|c|c|}
    \hline
        $\rho$ & $k=1$ & $k=1.5$ & $k=2.5$ & $k=5$  \\
        \hline
        \hline
       2  & $3.2\times 10^{-1}$ & $1.3\times 10^{-1}$ & $7.6\times 10^{-2}$ & $1.7\times 10^{-2}$ \\
       4  & $5.4\times 10^{-2}$ & $2.4\times 10^{-2}$ & $1.7\times 10^{-2}$ & $8.7\times 10^{-3}$\\
       6  & $1.2\times 10^{-2}$ & $9.4\times 10^{-3}$ & $8.8\times 10^{-3}$ & $1.9\times 10^{-3}$\\
       8  & $3.9\times 10^{-3}$ & $3.0\times 10^{-3}$ & $2.8\times 10^{-3}$ & $1.3\times 10^{-3}$\\
       10 & $2.3\times 10^{-3}$ & $2.3\times 10^{-3}$ & $1.9\times 10^{-3}$ & $6.1\times 10^{-4}$\\
       12 & $4.9\times 10^{-4}$ & $7.9\times 10^{-4}$ & $7.4\times 10^{-4}$ & $3.9\times 10^{-4}$\\
       14 & $5.1\times 10^{-4}$ & $7.3\times 10^{-4}$ & $5.0\times 10^{-4}$ & $2.6\times 10^{-4}$\\
       16 & $1.9\times 10^{-4}$ & $2.4\times 10^{-4}$ & $2.7\times 10^{-4}$ & $1.1\times 10^{-4}$\\
       \hline
    \end{tabular}
    \caption{Relative errors with different $k$'s.}
    \label{table}
\end{table}

\begin{figure}[h]
    \centering
    \includegraphics[width=0.45\textwidth]{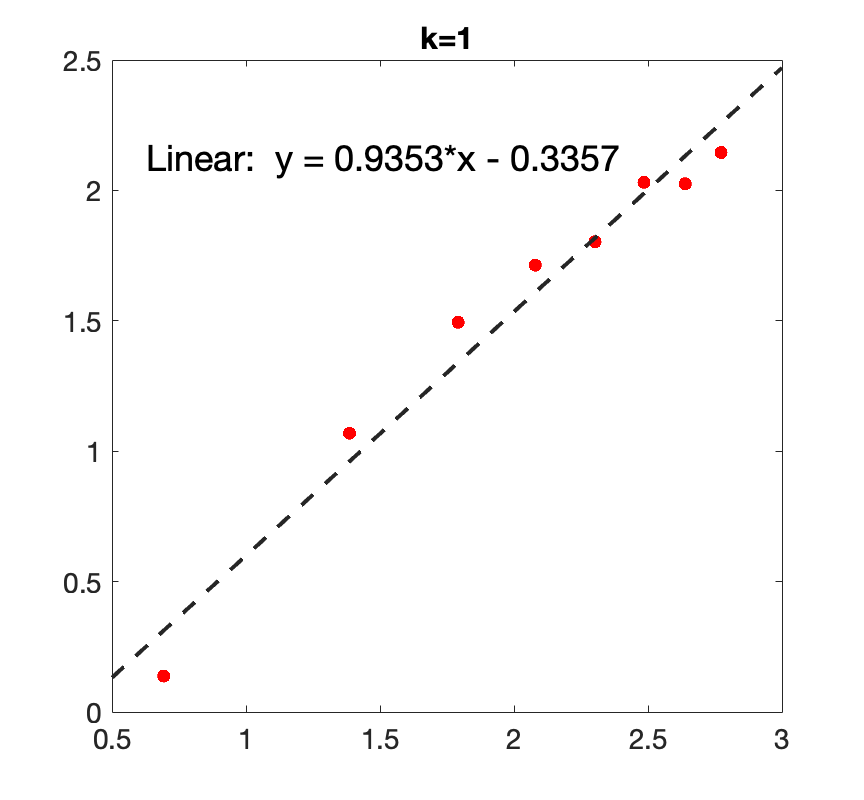}\includegraphics[width=0.45\textwidth]{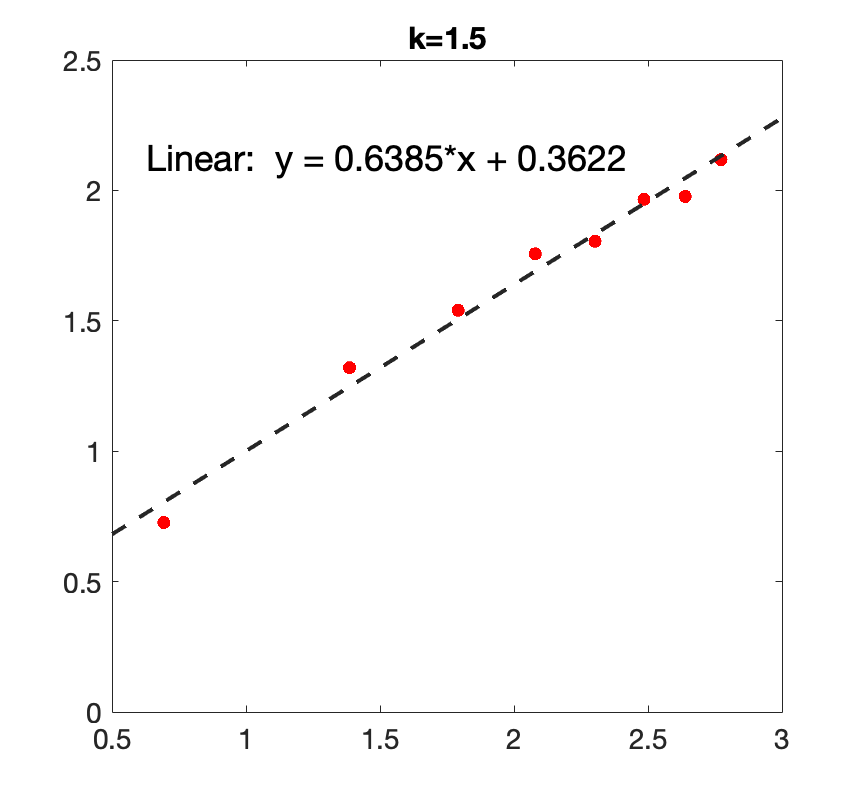}\\
    \includegraphics[width=0.45\textwidth]{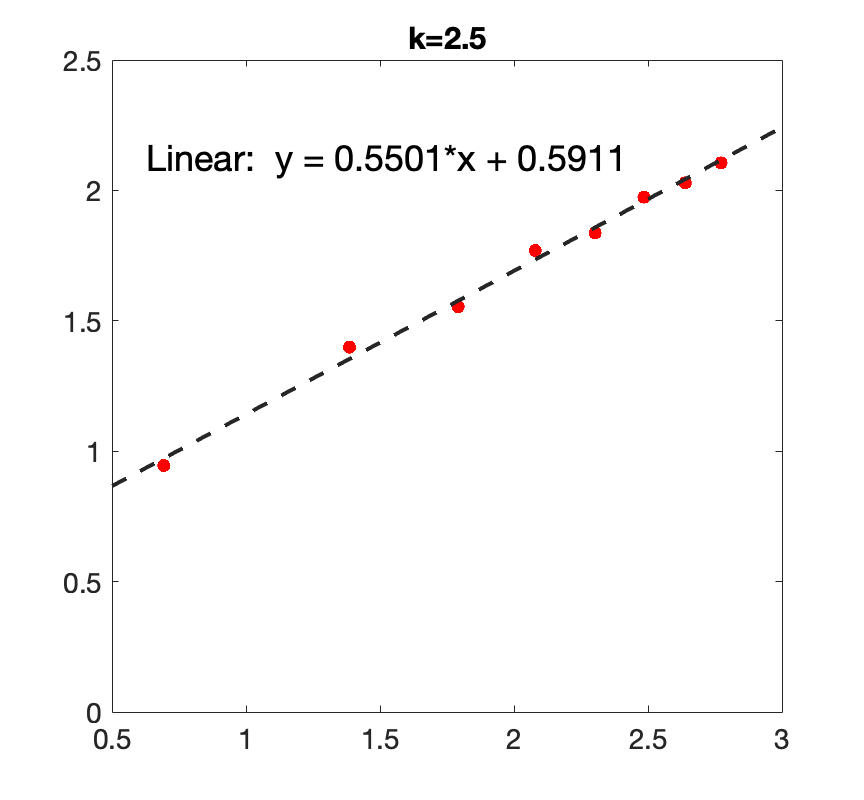}
    \includegraphics[width=0.45\textwidth]{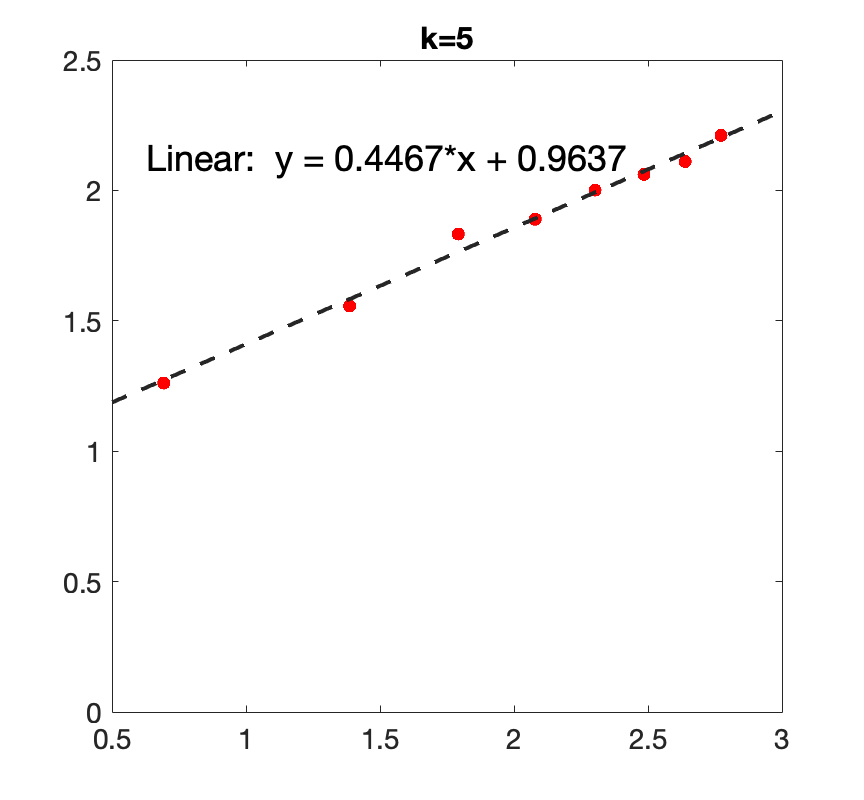}
    \caption{Semi-log plots for relative errors.}
    \label{fig:err}
\end{figure}

From the pictures, the linear regressions fit well for the computational results for all the four wave numbers. This implies that the numerical results coincide with our theoretical approach in Theorem \ref{th:final}. Although the convergence is not exponential, it is still faster than any algebraic order thus it converges super algebraically. This implies that the PML also provides very efficient numerical approximations for the exceptional cases which are excluded in \cite{Zhang2021a}.

\section*{Appendix}

In the appendix, we recall the definition and important properties of the Floquet-Bloch transform. Let $\phi\in C_0^\infty(\Omega_H)$, define the Floquet-Bloch transform by
\[
(\F\phi)(\alpha,x)=\sum_{j\in\Z}\phi(x_1+2\pi j,x_2)e^{-\i\alpha+j(x_1+2\pi j)},\quad x\in\Omega_H^0,\,\alpha\in[-1/2,1/2],
\]
where $\alpha$ is called the Floquet parameter.

It is easily checked that $(\F\phi)(\alpha,\cdot)$ is $2\pi$-periodic in $x_1$ direction, and $e^{\i\alpha x_1}\F\phi)(\alpha,x)$ is $1$-periodic with resepct to $\alpha$.

The definition of $\F$ is also extended to larger function spaces. First we need to define the space $H^m\left((-1/2,1/2];H^s(\Omega_H^0)\right)$ for any non-negative integer $m$ by the norm:
\[
\|\psi\|_{H^m\left((-1/2,1/2];H^s_\p(\Omega_H^{2\pi})\right)}:=\left[\sum_{\ell=0}^m\int_{-1/2}^{1/2} \left\|\partial^\ell_\alpha\psi(\alpha,\cdot)\right\|^2_{H^s_\p(\Omega^{0}_H)}\d\alpha\right]^2.
\]
This definition is extended to all $r>0$ by space interpolation and to negative $r$ from duality arguments. The important properties of the Floquet-Bloch transform is concluded in the following theorem.

\begin{theorem}[Theorem 8, \cite{Lechl2016}]\label{th:Bloch}
 The transform $\F$ is extended to an isomorphism between $H^s_r(\Omega_H)$ and $H^r\left((-1/2,1/2];H^s_\p(\Omega^\Lambda_H)\right)$ for any $s,\,r\in\R$.
 \[\left(\J^{-1} w\right)(x)=\int_{-1/2}^{1/2} w(\alpha,x)e^{\i \alpha x_1}\d\alpha,\quad x\in\Omega_H.
 \] 
\end{theorem}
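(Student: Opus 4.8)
The plan is to combine the two error sources—the PML modelling error and the Gauss–Legendre quadrature error—by splitting the total error through the intermediate discretized PML solution. Specifically, I would write
\[
\|u-u^\sigma_N\|\leq\|u-u_N\|+\|u_N-u_N^\sigma\|,
\]
where $u_N$ is the Gauss–Legendre discretization of the exact inverse Floquet–Bloch integral \eqref{eq:inv_trans} and $u_N^\sigma$ is the same quadrature applied to the PML integral \eqref{eq:inv_trans_sigma}. The first term is controlled purely by quadrature accuracy and the second term measures how far the PML-modified transparent-boundary solution is from the exact one at the quadrature nodes. Both pieces are already supplied by earlier results in the paper.

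First I would invoke \eqref{eq:conv_GL} to bound the quadrature error $\|u-u_N\|_{H^1(\Omega_H^0)}\leq C\rho^{-2N}$; this is justified because the integrands $e^{\pm\i\alpha x_1}w(\pm t^2,x)$ extend analytically to a neighbourhood of $[0,1/\sqrt{2}]$, so a confocal ellipse with $\rho>1$ fits inside the analyticity region and Theorem \ref{th:err_Gaussian} applies. Next I would invoke Theorem \ref{th:est_N}, which gives
\[
\|u_N-u_N^\sigma\|<C_5|\sigma|^{-1}N\exp\left(-C_4\sqrt{k}|\sigma|N^{-2}\right),
\]
the node-wise PML convergence being a consequence of Lemma \ref{lm:conv_w} together with the positivity and summability of the Gauss weights in \eqref{eq:weights}. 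Adding the two bounds yields the master inequality
\[
\|u-u^\sigma_N\|\leq C\rho^{-2N}+C_5|\sigma|^{-1}N\exp\left(-C_4\sqrt{k}|\sigma|N^{-2}\right).
\]

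The crux is then a balancing of the two exponents against one another by choosing $N$ as a power of $|\sigma|$. Writing $\rho=e^{C_6}$ so that $\rho^{-2N}=e^{-2C_6 N}$, and setting $N=|\sigma|^\gamma$ for some $\gamma\in(0,1)$, the quadrature term decays like $\exp(-2C_6|\sigma|^\gamma)$ while the PML term decays like $\exp(-C_4\sqrt{k}|\sigma|^{1-2\gamma})$ up to the polynomial prefactor $|\sigma|^{-1}$. To make both exponents grow at the same rate in $|\sigma|$, I would equate $\gamma=1-2\gamma$, giving $\gamma=1/3$; with this choice both exponentials decay like $\exp(-c|\sigma|^{1/3})$, which dominates the harmless prefactor $|\sigma|^{-1}$. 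Absorbing constants then produces $\|u-u^\sigma_N\|\leq C\exp(-c|\sigma|^{1/3})$, matching $N=O(\sqrt{|\sigma|})$ once one recalls $N=|\sigma|^{1/3}=O(|\sigma|^{1/2})$.

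I expect the only genuine subtlety to be the exponent optimization: one must verify that $\gamma=1/3$ is indeed the choice that maximizes the slower of the two decay rates, and that the polynomial factor $|\sigma|^{-1}N=|\sigma|^{-2/3}$ does not spoil the super-algebraic decay—which it does not, since any power of $|\sigma|$ is dwarfed by $\exp(-c|\sigma|^{1/3})$. Everything else is a direct assembly of Theorems \ref{th:err_Gaussian} and \ref{th:est_N}, so no new analysis beyond the triangle inequality and elementary optimization is required.
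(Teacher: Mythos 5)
Your proposal does not address the statement at all. Theorem \ref{th:Bloch} asserts a mapping property of the Floquet--Bloch transform: that $\F$ extends from smooth compactly supported functions to an isomorphism between the weighted Sobolev space $H^s_r(\Omega_H)$ and $H^r\left((-1/2,1/2];H^s_\p(\Omega^\Lambda_H)\right)$ for all $s,r\in\R$, together with the inversion formula $\left(\J^{-1} w\right)(x)=\int_{-1/2}^{1/2} w(\alpha,x)e^{\i \alpha x_1}\d\alpha$. What you have written is instead a proof of Theorem \ref{th:final}, the final PML convergence estimate $\|u-u^\sigma_N\|\leq C\exp(-c|\sigma|^{1/3})$: the triangle-inequality split through the intermediate discretization $u_N$, the invocation of \eqref{eq:conv_GL} and Theorem \ref{th:est_N}, and the balancing choice $N=|\sigma|^{\gamma}$ with $\gamma=1/3$ are exactly the ingredients of that theorem's proof (and indeed your argument reproduces the paper's own proof of Theorem \ref{th:final} essentially verbatim and correctly). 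None of it says anything about the boundedness, invertibility, or inversion formula of $\F$, so as a proof of Theorem \ref{th:Bloch} it is vacuous.

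Note also that the paper itself offers no proof of Theorem \ref{th:Bloch}; it is quoted as Theorem 8 of \cite{Lechl2016} and used as a black box. A genuine proof would follow Lechleiter's route: first verify the inversion formula and a Parseval-type identity for $\F$ on $C_0^\infty(\Omega_H)$, establishing the isometry (up to a constant) for the base case $r=0$ with $H^s$ regularity in $x$; then treat integer $r>0$ by observing that differentiation $\partial_\alpha^\ell$ of the transformed function corresponds, under the quasi-periodic factor $e^{-\i\alpha(x_1+2\pi j)}$, to polynomial weights $(1+x_1^2)^{r}$ on the physical side; then extend to non-integer $r>0$ by space interpolation and to $r<0$ by duality, exactly as the definition of $H^m\left((-1/2,1/2];H^s(\Omega_H^0)\right)$ in the paper's appendix anticipates. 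Since your write-up contains no trace of these steps, the gap is total: you have proved a different (later) theorem whose proof, while correct, presupposes rather than establishes the transform's mapping properties.
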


\section*{Acknowledgements.}
This research was funded by the Deutsche Forschungsgemeinschaft (DFG, German Research Foundation) – Project-ID 258734477 – SFB 1173.

\bibliographystyle{plain}
\providecommand{\noopsort}[1]{}

\end{document}